\newtheorem{theorem}{Theorem}[section]
\newtheorem{lemma}[theorem]{Lemma}
\newtheorem{proposition}[theorem]{Proposition}
\newtheorem{corollary}[theorem]{Corollary}
\theoremstyle{definition}
\newtheorem{definition}[theorem]{Definition}
\newtheorem{example}[theorem]{Example}
\theoremstyle{remark}
\numberwithin{equation}{section}
\begin{document}
\setcounter{page}{1}

\title[order almost Dunford-Pettis operators]{order almost Dunford-Pettis operators on Banach lattices}

\author[Ardakani and Modarres Mosadegh]
{H. Ardakani$^1$ and S.M.S. Modarres Mosadegh$^{2^*}$}

\address {$^{1, 2}$ Department of Mathematics, University of Yazd, P. O. Box 89195-741, Yazd, Iran.}

\email{\textcolor[rgb]{0.00,0.00,0.84}{halimeh\_ardakani@yahoo.com}}
\email{\textcolor[rgb]{0.00,0.00,0.84}{smodarres@yazd.ac.ir}}

\subjclass[2010]{46B40, 46B42, 47B60.}

\keywords{order almost Dunford-Pettis operator, almost weakly limited operator, almost Dunford-Pettis set, almost limited set.}

\date{Received: xxxxxx; Revised: yyyyyy; Accepted: zzzzzz.
\newline \indent $^{*}$ Corresponding author}

\begin{abstract}

By introducing the concepts of order almost Dunford-Pettis and almost weakly limited operators in Banach lattices, we give some properties of them related to some well known classes of operators, such as, order weakly compact, order Dunford-Pettis, weak and almost Dunford-Pettis and weakly limited operators. Then, we characterize Banach
lattices $E$ and $F$ on which each operator from $E$ into $F$ that is order almost Dunford-Pettis
and weak almost Dunford-Pettis is an almost weakly limited operator.

\end{abstract} \maketitle

\section{Introduction and preliminaries}
A subset $A$ of a Banach space $X$ is called limited (resp.
Dunford--Pettis (DP)), if every weak$^*$ null (resp. weak null) sequence $(x_n^*)$ in $X^*$  converges uniformly on $A$, that is, $$\lim_{n\rightarrow \infty} \sup_{a\in A}|\langle a,x_n^* \rangle|=0.$$ We know that every relatively compact subset of $X$ is limited and every limited set is DP, but the converse of these assertions, in general,
are false. The reader can be find some useful and additional properties of limited and DP sets in \cite{Limited} and \cite{Andrews}.\\
Recently in \cite{Almost limited set} and \cite{Almost DP set}, the class of almost limited sets and almost DP sets are introduced in Banach lattices. A subset $A$ of a Banach lattice $E$ is called almost limited (resp.
almost DP), if every disjoint weak$^*$ null (resp. disjoint weak null) sequence $(x_n^*)$ in $E^*$ converges uniformly on $A$.\\
It is clear that every almost limited set is almost DP, but the converse is false, in general. For instance, $B_c=[-1,1]$ is an almost DP set, but it is not almost limited.\\
Based on the concept of DP (resp. limited) sets, the class of order DP (resp. order limited)
operators is defined in \cite{ordered DP}, \cite{Dunford-Pettis sets} and \cite{ordered limited}. In fact, an operator $T$ from a Banach lattice $E$ into
a Banach space $X$ is said to be order DP (resp. order limited) if it carries each order bounded
subset of $E$ into a DP (resp. limited) set of $X$, i.e., if for each $x\in E^+$, the subset
$T ([-x, x])$ is DP (resp. limited) in $X$.\\
An operator $T: X ‎\rightarrow‎ E$ is
called limited, whenever $T(B_X)$ is an limited set. By \cite{almost limited set}, $T: X ‎\rightarrow‎ E$ is
called almost limited, whenever $T(B_X)$ is an almost limited set in $E$.\\
Following Lin in \cite{weakly limited}, An operator $T: X ‎\rightarrow‎ Y$ is
called weakly limited whenever $T(B_X)$ is a DP set in $Y$.\\
The aim of this paper is to introduce new classes of operators that we call
order almost DP and almost weakly limited operators and give some interesting applications of this class
of operators.
Also we will give some equivalent conditions for $T(A)$ to be a almost DP set, where $A$ is an almost DP (solid) subset of a Banach lattice $E$ and $T$ is an operator from $E$ to $X$.\\
It is evident that if $E$ is a Banach lattice, then its dual $E^*$, endowed with the dual norm and pointwise order, is also a Banach lattice. The norm $\|.\|$ of a Banach lattice $E$ is order continuous if for each generalized net $(x_{\alpha})$ such that $x_{\alpha}\downarrow0$ in $E$, $(x_{\alpha})$ converges to $0$ for the norm $\|.\|$, where the notation $x_{\alpha}\downarrow0$ means that the net
$(x_{\alpha})$ is decreasing, its infimum exists and inf$(x_{\alpha})=0$. A Banach lattice is said to be $\sigma$--Dedekind complete if for its countable subset that is bounded above has a supremum. A subset $A$ of $E$ is called solid if $|x|\leq |y| $ for some $y\in A$ implies that $x\in A$ and the solid hull of $A$ is the smallest solid set including $A$ and is exactly the set $Sol(A)=\lbrace y\in E:|y| \leq |x|, $ \ for \ some\ $x\in A\rbrace$.\\
Throughout this article, $X$ and $Y$ denote the arbitrary Banach spaces and $X^*$ refers to the dual of the Banach space $X$. Also $E$ and $F$ denote arbitrary Banach lattices and $E^+ =\lbrace x\in E : x\geq 0\rbrace$ refers to the positive cone of the Banach lattice $E$ and $B_E$ is the closed unit ball of $E$. If $a,b$ belong to a Banach lattice $E$ and $a\leq b$, the interval $[a,b]$ is the set of all $x\in E$ such that $a\leq x \leq b$. A subset of a Banach lattice is called order bounded if it is contained in an order interval. The lattice operations in $E$ are weakly sequentially continuous, if for every weakly null sequence $(x_n)$ in $E$, $|x_n|\rightarrow 0$ for $\sigma (E,E^*)$. The lattice operations in $E^*$ are weak sequentially continuous, if for every weak$^*$ null sequence $(f_n)$ in $E^*$, $|f_n|\rightarrow 0$ for $\sigma (E^*,E)$.\\ We refer the reader to \cite{Aliprantis} and \cite{Meyer} for unexplained terminologies on Banach lattice theory and positive operators.

\section{order almost Dunford-Pettis operators} 
In this section we will define  new classes of operators so called order almost DP and almost weakly limited operators and establish some additional properties of them related to some operators. \\
A bounded linear operator $T$ from a Banach space $X$ into a Banach space $Y$ is DP (resp. almost DP), if it carries weakly null (resp. disjoint weakly null) sequences in $X$ to norm null ones. See \cite{Aliprantis} and \cite{Sanchez almost DP operator}.\\
It is clear that $T$ is weakly limited if and only if $T^*$ is DP.

\begin{definition}\label{I} An operator $T$ from $E$ into $F$ is said to be order almost DP
if it carries each order bounded subset of $E$ into an almost DP set in $F$.
\end{definition}
\noindent Note that there exist operators which are order almost DP, but fail to be
order DP. Indeed, $Id_{\ell_{\infty}}: \ell_{\infty}‎\rightarrow‎ \ell_{\infty}$ is order almost DP (because $[-e,e]=B_{\ell_{\infty}}$ is almost DP set), but it is not order DP (because $[-e,e]=B_{\ell_{\infty}}$ is not DP set‌). It is clear that each order DP operator is order almost DP.\\
\noindent As in \cite{Aliprantis}, an operator $T$ from $X$ into $Y$ is said to be weak DP,
if it carries relatively weakly compact sets in $E$ to DP ones.\\
By a simple proof we can investigate that each weakly limited operator $T: E ‎\rightarrow‎ X$ is order DP and weak DP.

\begin{definition}\label{II} An operator $T$ from $X$ into $E$ is said to be almost weakly limited
whenever $T(B_X)$ is a almost DP set in $E$.
\end{definition}

\noindent It is clear that $T$ is almost weakly limited if and only if $T^*$ is almost DP.\\
An operator $T: X ‎\rightarrow‎ E$ is called weak and almost DP if $T$ carries each relatively weakly compact set in $X$ to an almost DP set in $E$, equivalently, for
every weakly null sequence $(x_n)‎\subset‎ X$, and every disjoint weak null sequence
$(f_n)‎\subset‎ E^*$ we have $f_n(Tx_n)‎\rightarrow 0$. See \cite{weak DP property}.\\
By a simple proof we can investigate that each almost weakly limited operator is order almost DP and weak almost DP.\\
\begin{example}\label{III} Every weakly limited operator is almost weakly limited, but the converse is false, in general. In fact, since the closed unit ball $L^1[0,1]$ is almost DP set, but it is not DP set, $Id_{ L^1[0,1]}: L^1[0,1] ‎\rightarrow‎ L^1[0,1]$ is  almost weakly limited, but it not weakly limited.\\
Also, the identity operator $Id_{\ell_{\infty}}: \ell_{\infty}‎\rightarrow‎ \ell_{\infty}$ is almost weakly limited operator and it is not weakly limited.
\end{example}

\noindent A Banach lattice $E$ has the Schur (resp. positive Schur) property, if every weakly null (resp. weakly null with positive terms)  sequence in $E$ is norm null, See \cite{Aliprantis} and \cite{positive Schur}.

\begin{corollary}\label{P} Every operator $T$ from a Banach space $X$ into a Banach lattice $E$ such that $E^*$ has the Schur (resp. positive Schur) property is weakly limited (resp. almost weakly limited).
\end{corollary}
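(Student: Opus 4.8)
The plan is to prove both statements in parallel by reducing each of them to the norm nullity of the relevant test sequences in $E^*$. Recall from the text that $T$ is weakly limited (resp. almost weakly limited) exactly when $T^*$ is DP (resp. almost DP); equivalently, unwinding the definition of a DP (resp. almost DP) set applied to $A=T(B_X)$, I must show that every weak null (resp. disjoint weak null) sequence $(f_n)$ in $E^*$ satisfies $\lim_n \sup_{x\in B_X}|f_n(Tx)|=0$. The first step is the elementary identity
\[
\sup_{x\in B_X}|f_n(Tx)|=\sup_{x\in B_X}|\langle x, T^*f_n\rangle|=\|T^*f_n\|,
\]
which turns the whole problem into showing $\|T^*f_n\|\to 0$. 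Since $T^*$ is bounded, $\|T^*f_n\|\le\|T\|\,\|f_n\|$, so it suffices to prove that the test sequence itself is norm null, i.e. $\|f_n\|\to 0$.

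With this reduction in hand, the Schur case is immediate. If $E^*$ has the Schur property, then by definition every weak null sequence $(f_n)$ in $E^*$ is norm null, so $\|f_n\|\to 0$ and hence $\sup_{x\in B_X}|f_n(Tx)|\to 0$. Thus $T(B_X)$ is a DP set and $T$ is weakly limited; phrased dually, $T^*$ carries the (now norm null) weak null sequences of $E^*$ to norm null sequences of $X^*$, so $T^*$ is DP.

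For the positive Schur case I would run the same argument, but now the test sequences $(f_n)$ are only disjoint and weak null, so the definition of the positive Schur property (which concerns positive weakly null sequences) does not apply verbatim. The key step -- and the one I expect to be the only real obstacle -- is the passage from positivity to disjointness: I would invoke the standard characterization that $E^*$ has the positive Schur property if and only if every disjoint weakly null sequence in $E^*$ is norm null (see \cite{positive Schur}). Granting this, a disjoint weak null sequence $(f_n)$ in $E^*$ is norm null, whence $\|T^*f_n\|\to 0$ and $T(B_X)$ is an almost DP set, i.e. $T$ is almost weakly limited. If one prefers a self-contained argument avoiding the citation, the natural route is to show that $(|f_n|)$ is itself weakly null -- using that $(f_n)$ is disjoint, so that $|f_n|=f_n^{+}+f_n^{-}$ with all parts pairwise disjoint -- and then apply the positive Schur property directly to the positive sequence $(|f_n|)$, concluding $\|f_n\|=\||f_n|\|\to 0$; establishing the weak nullity of $(|f_n|)$ from that of the disjoint sequence $(f_n)$ is precisely the delicate point.
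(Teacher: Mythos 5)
Your proposal is correct and takes essentially the same route as the paper, whose one-line proof simply notes that $E^*$ has the Schur (resp.\ positive Schur) property if and only if $B_E$ is a DP (resp.\ almost DP) set --- the same reduction of $\sup_{x\in B_X}|f_n(Tx)|=\|T^*f_n\|\le\|T\|\,\|f_n\|$ to norm-nullity of the test sequences. The disjoint-versus-positive subtlety you flag (that the positive Schur property of $E^*$ is equivalent to every \emph{disjoint} weakly null sequence in $E^*$ being norm null, via the fact that $(|f_n|)$ is weakly null for disjoint weakly null $(f_n)$) is a genuine known equivalence that the paper's ``if and only if'' silently absorbs, so your more explicit treatment is warranted but not a different argument.
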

 \begin{proof} We note that dual Banach lattice $E^*$ has the Schur (resp. positive Schur) property if and only if the closed unit ball $B_E$ is DP (resp. almost DP) set.
\end{proof}

\begin{proposition}\label{PP}In every Banach lattice we have the following assertions:
\begin{itemize}
\item [(a)] Each limited operator is weakly limited, but the converse is false, in general. Indeed $Id_{c_0}: c_0‎\rightarrow‎ c_0$ is weakly limited operator and it is not limited.
\item [(b)]  Each limited operator is almost limited, but the converse is false, in general. Indeed $Id_{\ell_{\infty}}: \ell_{\infty}‎\rightarrow‎ \ell_{\infty}$ is almost limited, but it is not limited.
\item [(c)]  Each almost limited operator is almost weakly limited, but the converse is false, in general. Indeed $Id_c: c‎\rightarrow‎ c$ is almost weakly limited, but it is not almost limited.
\end{itemize}
\end{proposition}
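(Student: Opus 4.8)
The plan is to split each clause into a forward inclusion and a counterexample, noting that all three forward implications rest on one fact: the weak topology $\sigma(E^*,E^{**})$ refines the weak$^*$ topology $\sigma(E^*,E)$, so every weak null sequence in a dual is weak$^*$ null, and consequently every disjoint weak null sequence is a disjoint weak$^*$ null sequence. For a fixed set $A$, the stronger notion demands uniform convergence over a larger family of test sequences. Hence a limited set (uniform convergence over all weak$^*$ null sequences) is automatically DP and also almost limited, while an almost limited set is almost DP. Applying each set-level implication to $A=T(B_X)$ gives the operator assertions (a), (b), (c); each of these is the promised ``simple proof''.

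For the counterexample in (a) I would take $\mathrm{Id}_{c_0}$. It is weakly limited by Corollary \ref{P}, since $c_0^*=\ell_1$ has the Schur property, so $B_{c_0}$ is a DP set. To see $B_{c_0}$ is not limited, I would test it against the coordinate functionals $(e_n^*)\subset\ell_1=c_0^*$: they are weak$^*$ null because $\langle x,e_n^*\rangle=x_n\to0$ for every $x\in c_0$, yet $\sup_{a\in B_{c_0}}|\langle a,e_n^*\rangle|=1$ (attained at $a=e_n$), so they do not converge uniformly on $B_{c_0}$.

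For (b) I would take $\mathrm{Id}_{\ell_\infty}$. Almost limitedness of $B_{\ell_\infty}=[-e,e]$ is the standard example: $\ell_\infty$ is Dedekind complete, and every disjoint weak$^*$ null sequence in $\ell_\infty^*$ is norm null, equivalently order intervals of this $\sigma$-Dedekind complete lattice are almost limited; see \cite{Almost limited set}. That $B_{\ell_\infty}$ is not limited then follows from the set-level implication limited $\Rightarrow$ DP together with the fact, already recorded after Definition \ref{I}, that $B_{\ell_\infty}$ is not a DP set: if it were limited it would be DP, a contradiction.

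The delicate clause is (c), with $\mathrm{Id}_c$. Almost weak limitedness again follows from Corollary \ref{P}, since $c^*=\ell_1$ has the Schur property and hence $B_c$ is even DP. The work is in showing $B_c$ is not almost limited. Since $\sup_{a\in B_c}|\langle a,f\rangle|=\|f\|$, the ball $B_c$ fails to be almost limited exactly when $c^*$ carries a disjoint weak$^*$ null sequence that is not norm null. Writing $\varphi\in c^*$ as $\varphi(x)=\alpha\lim_k x_k+\sum_n\beta_n x_n$, under which the dual order becomes the coordinatewise $\ell_1$ order, I would take $f_k=e_{2k}^*-e_{2k+1}^*$, that is $x\mapsto x_{2k}-x_{2k+1}$. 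These have pairwise disjoint supports $\{2k,2k+1\}$, so they are disjoint in $c^*$; they are weak$^*$ null since $x_{2k}-x_{2k+1}\to0$ for each $x\in c$ (both tending to $\lim_k x_k$); and $\|f_k\|=2$, witnessed on $B_c$ by the vector with $1$ in coordinate $2k$ and $-1$ in coordinate $2k+1$. I expect the main obstacle to be pinning down the lattice structure of $c^*$ precisely enough to certify this disjointness; once that is in hand, the weak$^*$ nullity and the lower norm bound are routine.
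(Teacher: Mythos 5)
Your proposal is correct and takes essentially the same route the paper intends: Proposition~\ref{PP} is stated in the paper without any proof, with the counterexamples $Id_{c_0}$, $Id_{\ell_\infty}$, $Id_c$ already embedded in the statement, and your argument supplies exactly the missing details --- the test-sequence inclusions (weak null $\Rightarrow$ weak$^*$ null, disjoint $\Rightarrow$ arbitrary) for the three forward implications, the Schur property of $\ell_1$ for the DP claims in (a) and (c), $\sigma$-Dedekind completeness of $\ell_\infty$ for (b), and the disjoint weak$^*$ null sequence $f_k=e_{2k}^*-e_{2k+1}^*$ for (c). The one point you flagged as an obstacle is in fact routine: under the identification $\varphi(x)=\alpha\lim_k x_k+\sum_n\beta_n x_n$, one checks (testing against $e_n$ and against $e-\sum_{n\le N}e_n$) that $\varphi\ge 0$ in $c^*$ iff $\alpha\ge 0$ and all $\beta_n\ge 0$, so the order on $c^*\cong\ell_1$ is coordinatewise and your functionals $f_k$ are indeed pairwise disjoint.
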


\noindent By \cite[Definition 3.6.1]{Meyer}, a subset $A$ in a Banach lattice $E$ is $L$-weakly compact, if every disjoint sequence in $Sol(A)$ is norm null. By \cite{Meyer} every $L$-weakly compact set is relatively weakly compact and the converse holds for Banach lattices with the positive Schur property, see \cite{positive Schur}. An operator $T$ from a Banach space $X$ into a Banach lattice $E$ is $L$-weakly compact, if $T(B_X)$ is an $L$-weakly compact set in $E$.

\begin{theorem}\label{PPP} Every $L$-weakly compact operator is almost weakly limited, but the converse is false, in general.
 \end{theorem}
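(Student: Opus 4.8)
The statement splits into an inclusion and a strictness claim, and I would handle them in turn.

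For the inclusion I would reduce the $L$-weak compactness of $A:=T(B_X)$ to a statement about disjoint functionals and then compare it with the definition of an almost DP set. The tool I would invoke is the dual characterization of $L$-weakly compact sets from \cite{Meyer}: a norm bounded subset $A$ of a Banach lattice $E$ is $L$-weakly compact if and only if $\lim_{n}\sup_{a\in A}|\langle a,f_n\rangle|=0$ for every disjoint sequence $(f_n)$ in $B_{E^*}$ (I only need the ``only if'' direction). Assuming $A$ is $L$-weakly compact, I would take an arbitrary disjoint weakly null sequence $(f_n)$ in $E^*$. Being weakly null it is norm bounded, say $\|f_n\|\le M$ for all $n$; since scaling preserves disjointness, $(M^{-1}f_n)$ is a disjoint sequence lying in $B_{E^*}$, so the characterization yields $\sup_{a\in A}|\langle a,M^{-1}f_n\rangle|\to 0$ and hence $\sup_{a\in A}|\langle a,f_n\rangle|\to 0$. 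This is exactly the condition that $A=T(B_X)$ be almost DP, i.e.\ that $T$ be almost weakly limited.

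For the strictness I would take the identity $Id_{\ell_\infty}\colon \ell_\infty\to\ell_\infty$, which by Example \ref{III} is almost weakly limited because $B_{\ell_\infty}=[-e,e]$ is an almost DP set. To see that it is not $L$-weakly compact, note that $B_{\ell_\infty}$ is an order interval and therefore solid, so $Sol(B_{\ell_\infty})=B_{\ell_\infty}$; the standard unit vectors $(e_n)$ form a pairwise disjoint sequence in $B_{\ell_\infty}$ with $\|e_n\|_\infty=1$ for every $n$, so this disjoint sequence in $Sol(B_{\ell_\infty})$ is not norm null. Hence $B_{\ell_\infty}$ is not $L$-weakly compact and $Id_{\ell_\infty}$ is not an $L$-weakly compact operator, which shows that the converse fails.

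The whole difficulty of the proof sits in the forward direction, and more precisely in the dual characterization of $L$-weak compactness quoted above; once it is available, the passage to almost DP sets is only a normalization, since every disjoint weakly null sequence in $E^*$ is in particular a bounded disjoint sequence. Alternatively, one could route the argument through almost limited sets, using that every almost limited set is almost DP, but this still rests on the same characterization applied this time to disjoint weak$^*$ null sequences.
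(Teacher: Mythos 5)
Your proof is correct and follows the same decomposition as the paper (positive direction plus counterexample), but both halves are carried out differently. For the positive direction the paper simply cites known results: \cite[Proposition 2.8]{Almost DP set} (every $L$-weakly compact set is an almost DP set), and alternatively \cite[Theorem 4.2]{Almost limited set} (every $L$-weakly compact operator is almost limited, hence almost weakly limited). You instead reprove the set-level fact from the disjoint-sequence duality for $L$-weakly compact sets in \cite{Meyer}: disjoint sequences in $B_{E^*}$ converge uniformly to zero on an $L$-weakly compact set, and a disjoint weakly null sequence in $E^*$ is, after normalization, such a sequence. This is a correct, self-contained derivation of exactly the fact the paper cites; what it buys is transparency (one sees precisely where disjointness and boundedness enter), at the cost of resting on the quoted Meyer--Nieberg characterization rather than on \cite{Almost DP set}. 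For strictness the paper takes $Id_{c_0}$, asserting that $B_{c_0}$ is almost DP but not $L$-weakly compact; you take $Id_{\ell_\infty}$, whose unit ball is the order interval $[-e,e]$ and hence almost DP by results already stated in the paper, and you verify the failure of $L$-weak compactness explicitly via the disjoint unit vectors in $Sol(B_{\ell_\infty})=B_{\ell_\infty}$. Both examples are valid; yours is slightly more self-contained, since the non-$L$-weak compactness is exhibited by a concrete disjoint sequence rather than asserted.
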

\begin{proof}
By \cite[Proposition 2.8]{Almost DP set}, every $L$-weakly compact set in a Banach lattice is an almost DP set. So every every $L$-weakly compact operator almost weakly limited.\\
Also by \cite[Theorem 4.2]{Almost limited set}, every $L$-weakly compact operator on a Banach lattice is an almost limited operator and so is an almost weakly limited operator.\\
The converse is false. In fact, $Id_{c_0}: c_0‎\rightarrow‎ c_0$ is almost weakly limited operator and it is not $L$-weakly compact, because the closed unit ball $c_0$ is an almost DP set, but it is not an $L$-weakly compact set.
\end{proof}

\noindent  Remember that a Banach lattice $E$ is an $AL$-space if $x \wedge y=0$ in $E$ implies $\|x+y\|=\|x\|+\|y\|$.

\begin{theorem}\label{C}\cite{Almost DP set} Let $E$ be an $AL$-space, for a norm bounded subset $A$ of $E$, the following
statements are equivalent.
\begin{itemize}
\item [(a)] $A$ is $L$-weakly compact.
\item [(b)] $A$ is relatively weakly compact.
\item [(c)] $A$ is DP set.
\item [(d)] $A$ is almost DP set.
\end{itemize}
\end{theorem}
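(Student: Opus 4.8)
The plan is to establish the cycle of implications (a)$\Rightarrow$(b)$\Rightarrow$(c)$\Rightarrow$(d)$\Rightarrow$(a), using three standard features of an $AL$-space $E$: it is Dedekind complete with order continuous norm, it has the Dunford--Pettis property, and its dual $E^*$ is an $AM$-space with unit $e^*$ (so $E^*$ is lattice isometric to some $C(K)$). Of the four arrows, only (d)$\Rightarrow$(a) is substantial; the others are short.

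The implication (a)$\Rightarrow$(b) is exactly the general fact recalled before the statement, that every $L$-weakly compact set is relatively weakly compact, and (c)$\Rightarrow$(d) is immediate because a disjoint weakly null sequence in $E^*$ is in particular weakly null. For (b)$\Rightarrow$(c) I would use the Dunford--Pettis property of $E$: if a relatively weakly compact $A$ were not a DP set, I could choose a weakly null $(f_n)\subseteq E^*$ and $a_n\in A$ with $|\langle a_n,f_n\rangle|\ge\varepsilon$, pass by Eberlein--\v{S}mulian to a subsequence $a_{n_k}\rightharpoonup a$, and split $\langle a_{n_k},f_{n_k}\rangle=\langle a_{n_k}-a,f_{n_k}\rangle+\langle a,f_{n_k}\rangle$, where the first term vanishes by the Dunford--Pettis property and the second because $(f_n)$ is weakly null --- a contradiction.

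For the main implication (d)$\Rightarrow$(a) I would argue by contradiction. If $A$ is almost DP but not $L$-weakly compact, there is a disjoint sequence $(y_n)$ in $\mathrm{Sol}(A)$ with $\|y_n\|\ge\varepsilon$, and hence $a_n\in A$ with $|y_n|\le|a_n|$. Since $E$ is Dedekind complete, the bands generated by the pairwise disjoint $y_n$ carry band projections $P_n$ with $P_nP_m=0$ for $n\ne m$, and these commute with the modulus. The decisive point is to produce functionals that detect the $a_n$ with no sign cancellation: since $|P_na_n|=P_n|a_n|\ge P_n|y_n|=|y_n|$, one gets $\|P_na_n\|\ge\|y_n\|\ge\varepsilon$, so choosing $\phi_n\in B_{E^*}$ norming $P_na_n$ and setting $\psi_n:=\phi_n\circ P_n$ yields $\langle a_n,\psi_n\rangle=\|P_na_n\|\ge\varepsilon$ with $\|\psi_n\|\le1$. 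A short computation with the projections (using $|\psi_n|\le|\phi_n|\circ P_n$ and the disjointness of the bands) shows the $\psi_n$ are pairwise disjoint. Finally, because $E^*$ is an $AM$-space with unit, every norm bounded disjoint sequence in $E^*$ is weakly null --- under the $C(K)$ picture the carriers of the $\psi_n$ are disjoint, so $\sum_n|\langle\psi_n,\mu\rangle|\le(\sup_n\|\psi_n\|)\,\|\mu\|$ for each $\mu\in E^{**}$ and the terms tend to $0$. Thus $(\psi_n)$ is a disjoint weakly null sequence in $E^*$, and the almost DP property of $A$ forces $\sup_{a\in A}|\langle a,\psi_n\rangle|\to0$, contradicting $\langle a_n,\psi_n\rangle\ge\varepsilon$.

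The step I expect to be the main obstacle is precisely (d)$\Rightarrow$(a), and within it the sign/cancellation issue: norming $y_n$ directly would only control $\langle|a_n|,\psi_n\rangle$, which is insufficient since $A$ need not be solid. The remedy above --- norming $P_na_n$ so that $\langle a_n,\psi_n\rangle$ itself is large --- together with the weak nullity of bounded disjoint sequences in the $AM$-space $E^*$, is the crux of the argument; I would take care to verify the disjointness of the $\psi_n$ and the weak (not merely weak$^*$) nullity claim, the latter being the place where the $AL$-structure of $E$ is genuinely used.
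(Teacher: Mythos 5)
Your proof is correct, but note that the paper itself offers no proof of this statement: Theorem \ref{C} is simply quoted from \cite{Almost DP set}, so the comparison is with the standard argument the paper implicitly relies on rather than with an in-paper proof. Your cycle (a)$\Rightarrow$(b)$\Rightarrow$(c)$\Rightarrow$(d)$\Rightarrow$(a) is sound: (a)$\Rightarrow$(b) is the general fact recalled in the preliminaries, (b)$\Rightarrow$(c) is a correct use of Grothendieck's Dunford--Pettis property of $L^1$-spaces (valid for $AL$-spaces by Kakutani's representation), (c)$\Rightarrow$(d) is trivial, and your (d)$\Rightarrow$(a) construction --- band projections $P_n$ onto the bands of the disjoint $y_n\in\mathrm{Sol}(A)$, the estimate $\|P_na_n\|\ge\|y_n\|$ from $|P_na_n|=P_n|a_n|\ge P_n|y_n|=|y_n|$, the disjoint functionals $\psi_n=\phi_n\circ P_n$, and the weak nullity of bounded disjoint sequences in $E^*\cong C(K)$ --- all checks out, including your correct identification of the sign-cancellation pitfall. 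The main difference from the route suggested by the paper's own preliminaries and the cited source is economy: the paper's preliminaries already record that relatively weakly compact sets are $L$-weakly compact in lattices with the positive Schur property (which $AL$-spaces have), giving (b)$\Rightarrow$(a) in one line; and the substantial direction (d)$\Rightarrow$(a) is usually dispatched by quoting Meyer--Nieberg's characterization of $L$-weak compactness (\cite[Prop. 3.6.2]{Meyer}: $A$ is $L$-weakly compact iff every norm bounded disjoint sequence in $E^*$ converges to $0$ uniformly on $A$), after which one only needs the observation that such sequences are weakly null in $C(K)$. Your band-projection argument is, in effect, a self-contained re-proof of the needed direction of that proposition; this buys independence from Meyer--Nieberg at the cost of length, while the citation-based route is shorter but leans on two external results.
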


\noindent An operator $T$ from $E$ into $X$ is said to be an order weakly compact operator,
if it carries order intervals in $E$ to relatively weakly compact sets in $X$. By \cite{ordered almost limited}, an operator $T$ from $E$ into $F$ is said to be order almost limited
if it carries each order bounded subset of $E$ into an almost limited set of $F$.

\begin{theorem}\label{CC} Every order weakly compact operator from a Banach lattice $E$ into an $AL$-space $E$ is an order almost limited and order DP operator.
 \end{theorem}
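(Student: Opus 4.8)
The plan is to let everything funnel through the $AL$-space equivalences recorded in Theorem \ref{C}. Write the operator as $T:E\to F$, where $F$ is the $AL$-space (the second ``$E$'' in the statement should read ``$F$''). Fix $x\in E^{+}$ and set $A=T([-x,x])$. Since order intervals are norm bounded and $T$ is a bounded operator, $A$ is a norm bounded subset of $F$, so Theorem \ref{C} applies to $A$; verifying this normalization is what I would do first.

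Next I would unwind the hypothesis. By the definition of an order weakly compact operator, $A=T([-x,x])$ is relatively weakly compact in $F$. As $F$ is an $AL$-space, Theorem \ref{C} makes conditions (a)--(d) equivalent for the norm bounded set $A$; in particular relative weak compactness (b) yields simultaneously that $A$ is a DP set (c) and an $L$-weakly compact set (a). Letting $x$ range over $E^{+}$, the implication ``(b)$\Rightarrow$(c)'' says exactly that $T$ sends every order interval to a DP set, i.e. $T$ is order DP; this half needs nothing beyond Theorem \ref{C}.

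For the ``order almost limited'' half I would use the remaining information, namely that each $A=T([-x,x])$ is $L$-weakly compact, and then pass from ``$L$-weakly compact'' to ``almost limited.'' The cleanest route is to invoke \cite[Theorem 4.2]{Almost limited set} at the level of sets (it is already quoted in the proof of Theorem \ref{PPP}): every $L$-weakly compact subset of a Banach lattice is an almost limited set. Applying this to each $A$ and again letting $x$ range over $E^{+}$ shows that $T$ carries every order bounded set to an almost limited set, which is the definition of an order almost limited operator.

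The only genuine obstacle is this last passage, since Theorem \ref{C} is silent about the (strictly stronger) almost limited property and closes only the ``DP / almost DP / weakly compact / $L$-weakly compact'' loop. If one prefers a self-contained argument to citing \cite[Theorem 4.2]{Almost limited set}, I would argue directly: given a disjoint weak$^{*}$ null sequence $(f_{n})\subset F^{*}$, I would bound $\sup_{a\in A}|f_{n}(a)|$ using the defining property that disjoint sequences in $Sol(A)$ are norm null, together with the fact that $F^{*}$ is an $AM$-space. Concretely, $|f_{n}(a)|\le |f_{n}|(|a|)$, and for the disjoint positive functionals $(|f_{n}|)$ one has $\big\|\bigvee_{k\le N}|f_{k}|\big\|=\max_{k\le N}\|f_{k}\|$ in the $AM$-space $F^{*}$, which keeps the partial sums of $\sum_{n}|f_{n}|(u)$ bounded and hence forces $|f_{n}|(u)\to 0$ for each fixed $u\in F^{+}$; squeezing $A$ into $[-u,u]$ up to $\varepsilon$ in norm (the ``almost order bounded'' description of $L$-weak compactness) then delivers the required uniform decay. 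This estimate is precisely where the $AL$/$AM$ duality does the real work.
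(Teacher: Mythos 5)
Your proposal is correct and follows essentially the same route as the paper: the paper also observes that each $T[-x,x]$ is relatively weakly compact, upgrades this to $L$-weakly compact and DP using the $AL$-space equivalences (Theorem \ref{C} / \cite{positive Schur}), and then cites \cite{Almost limited set} for the passage from $L$-weakly compact to almost limited. Your closing self-contained $AM$-duality argument is a sound extra, but the core argument is the paper's.
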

\begin{proof}
For each $x\in E^+$, $T[-x,x]$ is relatively weakly compact set. By \cite{positive Schur}, every relatively weakly compact set in an $AL$-space is an $L$-weakly compact set and by \cite{Almost limited set} it is an almost limited set. Also by Theorem \ref{C}, every relatively weakly compact set in an $AL$-space is DP set.
\end{proof}

\noindent By \cite{AZIZ ELBOUR}, an operator $T$ from $X$ into $E$ is said to be weak and almost limited operator,
if it carries relatively weakly compact sets in $X$ to almost limited ones.

\begin{corollary}\label{CCC} Every $L$-weakly compact operator from a Banach space $X$ to a Banach lattice $E$ is a weak and almost limited operator.
 \end{corollary}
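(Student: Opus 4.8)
The plan is to unwind the two definitions and reduce the statement to a simple containment argument. Let $T\colon X\to E$ be $L$-weakly compact, so that by definition $T(B_X)$ is an $L$-weakly compact subset of $E$. To show that $T$ is weak and almost limited in the sense of \cite{AZIZ ELBOUR}, I must verify that $T(W)$ is an almost limited set in $E$ for \emph{every} relatively weakly compact subset $W$ of $X$.

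First I would use that every relatively weakly compact set is norm bounded; hence there is a scalar $\lambda>0$ with $W\subseteq \lambda B_X$, and consequently $T(W)\subseteq \lambda\,T(B_X)$. The next observation is that both the class of $L$-weakly compact sets and the class of almost limited sets are stable under passing to subsets and to scalar multiples: this is immediate from the definitions, since $Sol(\lambda A)=\lambda\,Sol(A)$ and $Sol(B)\subseteq Sol(A)$ whenever $B\subseteq A$, while for almost limited sets one has $\sup_{b\in\lambda B}|x_n^*(b)|\le |\lambda|\sup_{a\in A}|x_n^*(a)|$ for every disjoint weak$^*$ null sequence $(x_n^*)$ in $E^*$. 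Therefore it suffices to know that $T(B_X)$ itself is almost limited.

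That last point is exactly where I would invoke the results already recorded in the proof of Theorem \ref{PPP}. Since $T(B_X)$ is $L$-weakly compact, $T$ is an $L$-weakly compact operator, and by \cite[Theorem 4.2]{Almost limited set} every such operator on a Banach lattice is an almost limited operator; equivalently, $T(B_X)$ is an almost limited set. Combining this with the hereditary property above, $T(W)\subseteq\lambda\,T(B_X)$ is almost limited, which is precisely what is needed to conclude that $T$ is weak and almost limited.

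I do not expect a genuine obstacle here: the corollary is a short deduction from the definition of $L$-weak compactness together with the already-cited implication ``$L$-weakly compact $\Rightarrow$ almost limited.'' The only point requiring a moment's care is the stability of the almost limited property under subsets and dilations, which is what lets the conclusion for an arbitrary relatively weakly compact $W$ follow from the single set $T(B_X)$; everything else is routine.
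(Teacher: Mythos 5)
Your proposal is correct and follows essentially the same route as the paper: the paper also notes that $T(B_X)$ is $L$-weakly compact, invokes \cite{Almost limited set} to conclude $T(B_X)$ is almost limited, and then passes to arbitrary relatively weakly compact sets. You merely make explicit the boundedness/subset/dilation step ($W\subseteq\lambda B_X$, hence $T(W)\subseteq\lambda T(B_X)$, and almost limited sets are stable under subsets and scalar multiples) that the paper leaves implicit in its final sentence.
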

 \begin{proof}
We note that $T(B_X)$ is an $L$-weakly compact set.  By \cite{Almost limited set}, $T(B_X)$ is an almost limited set. So $T$ carries each relatively weakly compact sets in $X$ to almost limited ones.
\end{proof}

\noindent There is an order DP operator which is not $L$-weakly compact. In fact, $Id_{c_0}: c_0‎\rightarrow‎ c_0$ is an order DP operator and it is not $L$-weakly compact, because the closed unit ball $c_0$ is a DP set, but it is not an $L$-weakly compact set.

\begin{corollary}\label{V} Every $L$-weakly compact operator $T$ from a Banach lattice $E$ into an $AL$-space is an order DP operator.
 \end{corollary}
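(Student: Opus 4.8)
The plan is to reduce the claim to the equivalence, recorded in Theorem \ref{C}, of $L$-weak compactness and the Dunford--Pettis property for norm bounded subsets of an $AL$-space. Recall that $T$ is order DP precisely when $T([-x,x])$ is a DP set for every $x \in E^+$, so the argument is carried out one order interval at a time.

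First I would note that each order interval is norm bounded: from $|y| \le x$ one gets $\|y\| \le \|x\|$, so $[-x,x] \subseteq \|x\|\, B_E$ and hence $T([-x,x]) \subseteq \|x\|\, T(B_E)$. Because $T$ is an $L$-weakly compact operator, $T(B_E)$ is $L$-weakly compact, and so is any scalar multiple $\|x\|\, T(B_E)$; this is immediate from $Sol(\lambda A) = |\lambda|\, Sol(A)$, since a disjoint sequence in the solid hull of $\|x\|\, T(B_E)$ is just $\|x\|$ times a disjoint sequence in $Sol(T(B_E))$, which is norm null.

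Next I would use that $L$-weak compactness passes to subsets: if $B \subseteq A$ then $Sol(B) \subseteq Sol(A)$, so any disjoint sequence contained in $Sol(B)$ lies in $Sol(A)$ and is therefore norm null when $A$ is $L$-weakly compact. Applying this to $T([-x,x]) \subseteq \|x\|\, T(B_E)$ shows that $T([-x,x])$ is itself $L$-weakly compact.

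Finally, since the range is an $AL$-space, the equivalence of (a) and (c) in Theorem \ref{C} converts $L$-weak compactness into the DP property, so $T([-x,x])$ is a DP set. As $x \in E^+$ was arbitrary, $T$ carries each order bounded subset into a DP set, i.e. $T$ is order DP. I expect no serious obstacle: the only step requiring any care is the stability of $L$-weak compactness under scalar multiples and subsets, which follows at once from the description of the solid hull, after which the conclusion is a direct application of Theorem \ref{C}.
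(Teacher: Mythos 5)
Your proof is correct and follows essentially the same route as the paper: both reduce the claim to the equivalence of $L$-weak compactness and the DP property in an $AL$-space (Theorem \ref{C}), starting from the fact that $T(B_E)$ is $L$-weakly compact. The only difference is cosmetic --- the paper applies Theorem \ref{C} directly to $T(B_E)$ and leaves the passage to order intervals implicit, whereas you first transfer $L$-weak compactness to $T([-x,x])$ via subsets and scalar multiples, which just makes explicit a step the paper glosses over.
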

 
 \begin{proof} If $T$ is an $L$-weakly compact operator from a Banach lattice $E$ into an $AL$-space, $T(B_E)$ is an $L$-weakly compact set and so by Theorem \ref{C}, it is a DP set. Hence, $T$ is an order DP operator. 
\end{proof}

\noindent By \cite{Meyer}, an element $x$ belonging to a Riesz space $E$ is discrete, if $x>0$ and $|y| \leq x$ implies $y=tx$ for some real number $t$. If every order interval $[0,y]$ in $E$ contains a discrete element, then $E$ is said to be a discrete Riesz space.

\begin{theorem}\label{VV}\cite{almost limited set} Let $T$ be an operator from a Banach space into a discrete Banach lattice with order continuous norm. Then  the following
statements are equivalent.
\begin{itemize}
\item [(a)] $T$ is almost limited.
\item [(b)] $T$ is $L$-weakly compact.
\item [(c)] $T$ is limited.
\item [(d)] $T$ is compact.
\end{itemize}
\end{theorem}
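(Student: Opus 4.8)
The plan is to prove the cyclic chain of implications $(d)\Rightarrow(c)\Rightarrow(a)\Rightarrow(b)\Rightarrow(d)$, so that all four statements become equivalent. Two of these arrows hold in an arbitrary Banach lattice and need no hypothesis on $E$: the implication $(d)\Rightarrow(c)$ is immediate from the fact, recalled in the introduction, that every relatively compact set is limited; and $(c)\Rightarrow(a)$ is equally formal, since a set on which \emph{every} weak$^*$ null sequence of $E^*$ converges uniformly is \emph{a fortiori} one on which every \emph{disjoint} weak$^*$ null sequence does. Thus the whole content of the theorem is concentrated in the arrows $(a)\Rightarrow(b)$ and $(b)\Rightarrow(d)$, and only there will I use that $E$ is discrete with order continuous norm.

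For $(a)\Rightarrow(b)$ I would argue by contraposition and by a disjointification. Assume $A:=T(B_X)$ is not $L$-weakly compact, so there is a disjoint sequence $(z_n)$ in $Sol(A)$ with $\|z_n\|\geq\delta>0$; write $|z_n|\leq|a_n|$ with $a_n\in A$. Since the norm is order continuous, $E$ is Dedekind complete, so the band projection $P_n$ onto the band generated by $z_n$ exists, is positive and contractive, and commutes with the modulus. As the $z_n$ are pairwise disjoint these bands are pairwise orthogonal, and from $|z_n|=P_n|z_n|\leq P_n|a_n|=|P_na_n|$ one gets $\|P_na_n\|\geq\|z_n\|\geq\delta$. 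Choosing for each $n$ a functional $h_n$ with $\|h_n\|\leq1$ and $h_n(P_na_n)\geq\delta$ and putting $f_n:=h_n\circ P_n$, I obtain $(f_n)$ in $B_{E^*}$ which is pairwise disjoint (the $f_n$ live on orthogonal bands) and satisfies $f_n(a_n)\geq\delta$. The crucial point is that $(f_n)$ is weak$^*$ null: for fixed $x\in E$ the partial sums $\sum_{k\leq N}P_k|x|$ are increasing and order bounded by $|x|$, hence norm convergent by order continuity, so $\|P_nx\|=\|P_n|x|\|\to0$ and therefore $|f_n(x)|\leq\|P_nx\|\to0$. Thus $(f_n)$ is a disjoint weak$^*$ null sequence with $\sup_{x\in A}|f_n(x)|\geq f_n(a_n)\geq\delta$, contradicting that $A$ is almost limited. (Note that this step consumes only order continuity of the norm, not discreteness.)

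For $(b)\Rightarrow(d)$ the discreteness becomes essential. First I would record that in a discrete Banach lattice with order continuous norm every order interval $[-u,u]$ is norm compact: expanding $u\in E^+$ along the atoms and using order continuity, the tails $\sum_{\gamma\notin F}u_\gamma e_\gamma$ over finite index sets $F$ are uniformly small, so $[-u,u]$ is approximated in Hausdorff distance by finite-dimensional boxes and is therefore totally bounded, hence compact. Next I would invoke the standard description of $L$-weakly compact sets: if $A$ is $L$-weakly compact then for every $\varepsilon>0$ there is $u\in E^+$ with $A\subseteq[-u,u]+\varepsilon B_E$. Combining the two facts, $A$ is covered up to $\varepsilon$ by the totally bounded set $[-u,u]$, so $A$ is itself totally bounded and $T$ is compact.

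The genuine obstacle is the arrow $(a)\Rightarrow(b)$: everything hinges on producing, from a normalized disjoint sequence in $Sol(A)$, a disjoint weak$^*$ null sequence in $E^*$ that still separates $A$ from $0$. The two delicate points are the disjointification of the testing functionals via band projections and, above all, the verification that these functionals are weak$^*$ null, which is exactly where order continuity of the norm is used. Once these are in hand, the remaining implications are either purely formal or rest on the compactness of order intervals in the discrete order continuous setting, where discreteness enters in an indispensable way.
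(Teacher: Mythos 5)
Your proof is correct, but note that there is nothing in the paper to compare it against: Theorem \ref{VV} is stated with a citation to \cite{almost limited set} and the paper gives no proof at all, so your proposal is in effect a self-contained reconstruction of the argument from the cited literature. Your cyclic scheme $(d)\Rightarrow(c)\Rightarrow(a)\Rightarrow(b)\Rightarrow(d)$ is sound and correctly isolates where the hypotheses enter. In $(a)\Rightarrow(b)$, the band-projection disjointification works as claimed: order continuity of the norm gives Dedekind completeness, so the projections $P_n$ exist; each $f_n=h_n\circ P_n$ has modulus carried by the band $B_n$ generated by $z_n$, and functionals carried by pairwise orthogonal bands are pairwise disjoint; and your verification that $\|P_nx\|\to 0$ (increasing partial sums $\sum_{k\le N}P_k|x|\le|x|$ plus order continuity) is exactly what makes $(f_n)$ weak$^*$ null. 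Your parenthetical observation that this arrow consumes only order continuity, not discreteness, agrees with the known result that in a Banach lattice with order continuous norm every almost limited set is $L$-weakly compact. In $(b)\Rightarrow(d)$, the two ingredients --- norm compactness of order intervals in a discrete Banach lattice with order continuous norm, and the almost order boundedness $A\subseteq[-u,u]+\varepsilon B_E$ of $L$-weakly compact sets (see \cite{Meyer}, Proposition 3.6.2) --- are both standard and are combined correctly through total boundedness. Two points you leave implicit but which are not gaps: in $(a)\Rightarrow(b)$ one should pass to a subsequence of $(z_n)$ bounded away from zero, since the negation of $L$-weak compactness only yields a disjoint sequence in $Sol(A)$ that fails to be norm null; and the pairwise disjointness of the $f_n$ deserves the one-line justification that a functional vanishing on $B_n^d$ has its modulus vanishing there too, so that $|f_n|\wedge|f_m|=0$ follows by splitting any $x\ge 0$ along the band decomposition. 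With those remarks inserted, the argument is complete and is essentially the proof one finds in the source the paper cites.
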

 
 \noindent An operator $T$ from $E$ into $X$ is said to be an $AM$-compact operator,
if it carries order intervals in $E$ to relatively compact sets in $X$. It is clear that every $AM$-compact operator is an order DP and so order almost DP operator, but the converse is false, in general. For the converse see the following theorem.
 
 \begin{theorem}\label{VVV} Every order almost DP operator $T$ from a Banach lattice $E$ into a discrete $AL$-space $F$ is $AM$-compact operator and this condition on $F$ can not be removed.
 \end{theorem}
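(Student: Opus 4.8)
The plan is to fix an arbitrary $x\in E^{+}$ and prove that $T[-x,x]$ is relatively compact in $F$; since this is precisely what $AM$-compactness of $T$ requires, the theorem follows. First I would record what the hypotheses give for free: because $T$ is order almost DP, the set $T[-x,x]$ is almost DP in $F$, and it is norm bounded, being the image of an order bounded set under a bounded operator. Since $F$ is an $AL$-space, Theorem \ref{C} upgrades this to the stronger fact that $T[-x,x]$ is $L$-weakly compact (the four properties listed there being equivalent). What remains is therefore the genuinely new implication: in a \emph{discrete} $AL$-space, an $L$-weakly compact set is relatively norm compact.

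To access this I would reduce to the operator setting of Theorem \ref{VV}. Let $E_{x}$ denote the principal ideal generated by $x$, normed by $\|y\|_{x}=\inf\{\lambda>0:|y|\le\lambda x\}$; this is an $AM$-space with unit $x$, hence a Banach space, and its closed unit ball is exactly $[-x,x]$. Because $\|y\|_{E}\le\|x\|_{E}\,\|y\|_{x}$, the inclusion $E_{x}\hookrightarrow E$ is bounded, so $S:=T|_{E_{x}}\colon E_{x}\to F$ is a bounded operator with $S(B_{E_{x}})=T[-x,x]$. By the previous paragraph this image is $L$-weakly compact, so $S$ is an $L$-weakly compact operator. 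Now $F$ is discrete and, being an $AL$-space, has order continuous norm, so Theorem \ref{VV} applies to $S$ and gives that $L$-weak compactness and compactness coincide; hence $S$ is compact, and $T[-x,x]=S(B_{E_{x}})$ is relatively compact, completing the main assertion.

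For the claim that the hypothesis on $F$ cannot be dropped I would exhibit $Id_{L^{1}[0,1]}$, whose range space is an $AL$-space that fails to be discrete. Each order interval $[-x,x]$ in $L^{1}[0,1]$ is $L$-weakly compact: a disjoint sequence $(y_{n})$ with $|y_{n}|\le x$ satisfies $\sum_{n=1}^{N}|y_{n}|=\bigvee_{n=1}^{N}|y_{n}|\le x$, so $\sum_{n}\|y_{n}\|_{1}\le\|x\|_{1}$ and thus $\|y_{n}\|_{1}\to 0$; by Theorem \ref{C} the interval is almost DP, whence $Id_{L^{1}[0,1]}$ is order almost DP. Yet $[-\mathbf{1},\mathbf{1}]$ is not norm compact (it contains the Rademacher functions, which admit no convergent subsequence), so the operator is not $AM$-compact, showing that discreteness cannot be removed. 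The already noted operator $Id_{\ell_{\infty}}$, order almost DP but not $AM$-compact into the discrete non-$AL$-space $\ell_{\infty}$, shows dually that the $AL$-space hypothesis is also essential.

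The step I expect to be the crux is the bridge from the set-level conclusion of Theorem \ref{C} ($L$-weak compactness) to the operator-level Theorem \ref{VV}: manufacturing the Banach space $E_{x}$ whose unit ball is exactly $[-x,x]$, and verifying that $S(B_{E_{x}})$ coincides with $T[-x,x]$, is the maneuver that allows the two cited results to be chained, and it is the one place where care is needed.
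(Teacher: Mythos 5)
Your proof is correct and follows the same route as the paper's: fix $x\in E^{+}$, note $T[-x,x]$ is almost DP, upgrade to $L$-weak compactness via Theorem~\ref{C}, and then use Theorem~\ref{VV} to conclude relative compactness. The differences are refinements, and they are in your favor. The paper applies Theorem~\ref{VV} --- which is stated for \emph{operators} from a Banach space into a discrete Banach lattice with order continuous norm --- directly to the \emph{set} $T[-x,x]$, leaving implicit how an operator-level statement yields a set-level conclusion; your construction of the principal ideal $E_{x}$ (an $AM$-space with unit whose closed unit ball is exactly $[-x,x]$) and the restriction $S=T|_{E_{x}}$ supplies exactly the missing bridge, and it works because $F$, being an $AL$-space, automatically has order continuous norm, so Theorem~\ref{VV} applies to $S$. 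On the second assertion, the paper exhibits only $Id_{\ell_{\infty}}$ (and $Id_{c}$), where the target is discrete but not an $AL$-space, so it shows only that the $AL$ hypothesis is essential; your $Id_{L^{1}[0,1]}$ example additionally shows that discreteness cannot be dropped, since $L^{1}[0,1]$ is an $AL$-space, order intervals there are $L$-weakly compact (hence the identity is order almost DP), yet $[-\mathbf{1},\mathbf{1}]$ contains the Rademacher sequence, which has no norm-convergent subsequence. Your write-up is thus a strictly more complete justification of the statement than the one in the paper.
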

 \begin{proof} For each $x\in E^+$, $T[-x,x]$ is almost DP set and by Theorem \ref{C} it is $L$-weakly compact and also by Theorem \ref{VV} it is relatively compact. Hence $T$ is an $AM$-compact operator. \\
There is an order almost DP operator which is not $AM$-compact. Indeed $Id_{\ell_{\infty}}: \ell_{\infty}‎\rightarrow‎ \ell_{\infty}$ is order almost DP operator, but it is not $AM$-compact. Every order DP operator is not $AM$-compact, in general. Indeed $Id_c: c‎\rightarrow‎ c$ is order DP operator, but it is not $AM$-compact. 
\end{proof}

\begin{proposition}\label{M} We have the following statements.
\begin{itemize}
\item [(a)] Every almost limited operator from a Banach space $X$ into a discrete Banach lattice $E$ with order continuous norm is  limited.
\item [(b)] Every almost weakly limited operator from a Banach space $X$ into an $AL$-space $E$ is weakly limited.
\item [(c)] Every almost weakly limited operator from a Banach space $X$ into an $AL$-space $E$ is almost limited.
\item [(d)] Every weakly limited operator from a Banach space $X$ into a Grothendieck space is limited.
\end{itemize}
\end{proposition}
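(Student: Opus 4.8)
The plan is to treat each of the four parts separately, reducing every assertion to a characterization already established in the excerpt, so that each becomes a short chain of implications rather than a sequential estimate built from scratch. In all four cases the operators are defined through the behaviour of the image of the unit ball, so I will translate the hypothesis and the conclusion into statements about $T(B_X)$ and then apply the relevant set-level theorem.

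For (a) I would simply invoke Theorem~\ref{VV}. Since $E$ is a discrete Banach lattice with order continuous norm, ``$T$ is almost limited'' and ``$T$ is limited'' are exactly conditions (a) and (c) there, which are listed as equivalent; hence one direction of that equivalence is precisely the claim, with nothing further to verify.

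For (b) and (c) the common observation is that, by Definition~\ref{II}, the hypothesis that $T$ is almost weakly limited means $T(B_X)$ is an almost DP subset of the $AL$-space $E$, and Theorem~\ref{C} collapses the four notions---$L$-weakly compact, relatively weakly compact, DP, and almost DP---for norm bounded subsets of an $AL$-space. Thus for (b) I would use the implication (d)$\Rightarrow$(c) of Theorem~\ref{C} to conclude that $T(B_X)$ is a DP set, which by definition says $T$ is weakly limited. For (c) I would instead use (d)$\Rightarrow$(a) of Theorem~\ref{C} to see that $T(B_X)$ is $L$-weakly compact; since every $L$-weakly compact set is almost limited (\cite{Almost limited set}, exactly as used in the proof of Corollary~\ref{CCC}), $T(B_X)$ is almost limited, i.e.\ $T$ is almost limited.

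For (d) the one genuinely new ingredient is the Grothendieck property: in a Grothendieck space $E$, every weak$^*$ null sequence $(x_n^*)$ in $E^*$ is in fact weakly null. Since $T$ is weakly limited, $T(B_X)$ is a DP set, so every weakly null sequence in $E^*$ converges uniformly on $T(B_X)$; combined with the Grothendieck property this yields $\lim_n \sup_{x\in B_X}|\langle Tx, x_n^*\rangle|=0$ for every weak$^*$ null $(x_n^*)$, which is precisely the statement that $T(B_X)$ is a limited set, i.e.\ $T$ is limited. The only delicate point across the whole proposition is in this last part: one must keep the direction of the Grothendieck implication straight (weak$^*$ null $\Rightarrow$ weakly null, not the reverse) and check that the sequential definitions of ``limited'' and ``DP'' sets align correctly. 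The remaining parts are immediate translations through Theorems~\ref{C} and~\ref{VV}, so I expect no substantive obstacle there.
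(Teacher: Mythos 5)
Your proposal is correct: part (a) is exactly the implication (a)$\Rightarrow$(c) of Theorem~\ref{VV}, parts (b) and (c) follow from Definition~\ref{II} together with Theorem~\ref{C} (plus the fact, already used in Corollary~\ref{CCC}, that $L$-weakly compact sets are almost limited), and part (d) is the standard Grothendieck argument with the implication in the right direction. The paper actually states Proposition~\ref{M} without any proof, and your argument is precisely the one the surrounding results (Theorems~\ref{C}, \ref{PPP}, \ref{VV} and Corollary~\ref{CCC}) are set up to deliver, so there is nothing to add.
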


\begin{theorem}\label{MM} Every order bounded operator on a Banach lattice is an order almost DP operator.
\end{theorem}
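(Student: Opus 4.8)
The plan is to reduce the statement to a single structural fact about order intervals and then establish that fact by a disjointification argument carried out in the bidual. First I would observe that since $T$ is order bounded, any order bounded $A\subseteq E$ satisfies $A\subseteq[-x,x]$ for some $x\in E^+$, and then $T(A)\subseteq T([-x,x])\subseteq[-y,y]$ for some $y\in F^+$, because $T$ sends the order interval $[-x,x]$ to an order bounded set. Since every subset of an almost DP set is again almost DP (the defining supremum only shrinks when the set does), it suffices to prove the claim that \emph{every order interval $[-y,y]$ of a Banach lattice $F$ is an almost DP set}.

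To treat this claim I would use the Riesz--Kantorovich identity $\sup_{z\in[-y,y]}|\langle z,f\rangle|=\langle y,|f|\rangle$, valid for $y\in F^+$ and $f\in F^*$. The claim then amounts to the key lemma: for every disjoint weakly null sequence $(f_n)$ in $F^*$ and every $y\in F^+$ one has $\langle y,|f_n|\rangle\to 0$. I would argue by contradiction, assuming $\langle y,|f_n|\rangle\ge\varepsilon$ along a subsequence. Writing $|f_n|=f_n^++f_n^-$ and passing to a further subsequence, I may assume (replacing $f_n$ by $-f_n$ if necessary) that $g_n:=f_n^+$ satisfies $g_n(y)\ge\delta:=\varepsilon/2$; the $g_n$ then form a disjoint sequence in $(F^*)^+$.

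The core of the argument exploits that $F^*$ is Dedekind complete. Let $Q_n$ be the band projection of $F^*$ onto the band generated by $g_n$; disjointness gives $Q_ng_m=\delta_{nm}g_m$, while $f_m^-\perp g_k$ for all $k$ (from $f_m\perp f_k$ when $k\neq m$ and $f_m^-\perp f_m^+$ when $k=m$) yields $Q_nf_m^-=0$ for all $m,n$. Setting $w_n:=Q_n^*\hat y\in F^{**}$, where $\hat y$ is the canonical image of $y$, the adjoints $Q_n^*$ are band projections onto pairwise disjoint bands, so $(w_n)$ is disjoint with $0\le w_n\le\hat y$, and a direct computation gives $\langle w_n,f_m\rangle=\langle\hat y,Q_nf_m\rangle=\delta_{nm}g_n(y)\ge\delta_{nm}\delta$. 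The main obstacle is that positive functionals need not be order continuous, so I cannot simply form $w:=\sup_n w_n$ and evaluate $\langle w,f_n\rangle$. I would bypass this by Mazur's theorem: since $f_n\to 0$ weakly, there are finite convex combinations $h=\sum_j\lambda_jf_{n_j}$ with $\|h\|$ arbitrarily small. Because the $w_{n_j}$ are disjoint, $\sum_jw_{n_j}=\bigvee_jw_{n_j}\le\hat y$, whence $\|\sum_jw_{n_j}\|\le\|y\|$, while biorthogonality gives $\langle\sum_jw_{n_j},h\rangle=\sum_j\lambda_jg_{n_j}(y)\ge\delta$. Combining the two, $\delta\le\|y\|\,\|h\|$ for $\|h\|$ as small as we please, a contradiction that proves the lemma and hence the theorem. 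I expect the delicate point to be exactly the order-continuity failure just noted, which is why the convex-combination step, rather than a naive supremum in $F^{**}$, is essential.
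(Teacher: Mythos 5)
Your proof is correct, but it is considerably more self-contained than the paper's. The skeleton is identical: you reduce, exactly as the paper does, to the single fact that every order interval $[-y,y]$ of a Banach lattice is an almost DP set (an order bounded operator maps $[-x,x]$ into an order bounded set, and subsets of almost DP sets are almost DP). The paper then finishes in one line by citing \cite[Corollary 2.2]{Almost DP set}; in the literature that corollary rests on the classical Aliprantis--Burkinshaw theorem that every disjoint sequence in the solid hull of a relatively weakly compact set is weakly null, applied to the relatively weakly compact set $\{f_n\}$ and the disjoint sequence $(|f_n|)$ in its solid hull, which yields $|f_n|(y)\to 0$ at once. You instead prove this key lemma from scratch, and your argument is sound: $Q_n^*$ really is a band projection on $F^{**}$ (it is positive, idempotent, and dominated by the identity), the relation $Q_n^*Q_m^*=(Q_mQ_n)^*=0$ makes the elements $w_n=Q_n^*\hat y$ pairwise disjoint with $0\le w_n\le \hat y$, the identity $Q_nf_m=\delta_{nm}g_m$ gives the biorthogonality, and the Mazur convex-combination step legitimately substitutes for the unavailable supremum of the $w_n$. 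In effect you have reproved, in exactly the special case needed, the disjoint-sequence theorem underlying the cited corollary. The paper's route buys brevity; yours buys transparency: in particular it makes visible precisely where the weak (rather than merely weak$^*$) nullity of $(f_n)$ enters, namely at the Mazur step, which is exactly why order intervals are always almost DP yet can fail to be almost limited (e.g.\ $B_c=[-1,1]$ in $c$, as noted in the paper's introduction).
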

\begin{proof} We note that every order bounded operator maps order intervals into order intervals. By \cite[Corollary 2.2]{Almost DP set}, each order interval of a Banach lattice is an almost DP set. 
\end{proof}

\noindent Remember that a Banach lattice $E$ is an $AM$-- space if $x \wedge y=0$ in $E$ implies $\|x\vee y\|=\max \lbrace\|x\|,\|y\|\rbrace$.

\begin{corollary}\label{MMM} If $E$ is a $AM$-space with unit, then identity operator $Id_E$ is almost weakly limited.
\end{corollary}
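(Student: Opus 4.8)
The plan is to exploit the special structure of an $AM$-space with unit, together with the fact (already used in the proof of Theorem \ref{MM}) that order intervals are almost DP sets. By Definition \ref{II}, showing that $Id_E$ is almost weakly limited amounts to showing that $Id_E(B_E) = B_E$ is an almost DP set in $E$, so the entire task reduces to identifying the closed unit ball of $E$.

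First I would recall the classical structural fact that if $E$ is an $AM$-space with unit $e$, then its closed unit ball coincides with the order interval generated by the unit, namely $B_E = [-e,e]$. This is standard from Banach lattice theory (see \cite{Meyer}); the defining property $\|x \vee y\| = \max\{\|x\|,\|y\|\}$ for disjoint $x,y$ is precisely what forces the norm to be the order-unit norm $\|x\| = \inf\{\lambda > 0 : |x| \leq \lambda e\}$, whose unit ball is exactly $[-e,e]$.

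Next I would invoke \cite[Corollary 2.2]{Almost DP set}, the same result already applied in Theorem \ref{MM}, which guarantees that every order interval in a Banach lattice is an almost DP set. Applying this to the interval $[-e,e]$ yields that $B_E = [-e,e]$ is an almost DP set. Consequently $Id_E(B_E) = B_E$ is almost DP, and by Definition \ref{II} the identity operator $Id_E$ is almost weakly limited. Alternatively, one may route the argument through Theorem \ref{MM} itself: since $Id_E$ is trivially order bounded, it is order almost DP, hence maps the order interval $[-e,e]$ to an almost DP set; as that image is $B_E$, the same conclusion follows.

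I do not expect any genuine obstacle here. The only point requiring care is the identification $B_E = [-e,e]$, which is the sole nontrivial input and is a well-known consequence of the $AM$-with-unit hypothesis; everything after that is a direct application of an already-cited result. This is why the statement is naturally phrased as a corollary of Theorem \ref{MM} rather than proved from scratch.
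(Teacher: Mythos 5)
Your proposal is correct and follows essentially the same route as the paper: identify $B_E$ with the order interval $[-e,e]$ (the paper phrases this as $B_E$ being order bounded), invoke \cite[Corollary 2.2]{Almost DP set} to conclude $B_E$ is almost DP, and then apply Definition \ref{II}. The only difference is cosmetic — you make the identification $B_E=[-e,e]$ explicit, while the paper only uses order boundedness of $B_E$ — so there is nothing to add.
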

 \begin{proof} In each $AM$-space $E$ with unit, the closed unit ball $B_E$ is an order bounded set and so it is almost DP set. So, the identity operator $Id_E$ is almost weakly limited.
\end{proof}

\begin{theorem}\label{R} Every order almost limited operator from a Banach lattice $E$ into $F$ is an order almost DP operator, and the converse holds, if $F$ is an $AL$-space.
\end{theorem}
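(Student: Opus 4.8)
The plan is to handle the two implications separately, with the forward direction being essentially immediate and the converse resting on the special structure of $AL$-spaces captured in Theorem \ref{C}.

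For the forward implication, I would suppose $T$ is order almost limited. Then for each $x \in E^+$ the set $T[-x,x]$ is almost limited in $F$. Since every almost limited set is an almost DP set (as noted in the introduction), it follows that $T[-x,x]$ is almost DP, and therefore $T$ is order almost DP. Note that this direction requires no hypothesis on $F$.

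For the converse, I would assume $F$ is an $AL$-space and that $T$ is order almost DP. Fixing $x \in E^+$, the set $T[-x,x]$ is a norm bounded almost DP subset of $F$. The key step is to invoke Theorem \ref{C}: in an $AL$-space every almost DP set is $L$-weakly compact, so $T[-x,x]$ is $L$-weakly compact. I would then appeal to the result of \cite{Almost limited set} that every $L$-weakly compact set in a Banach lattice is almost limited (the same fact already used in the proofs of Theorem \ref{PPP} and Corollary \ref{CCC}). Consequently $T[-x,x]$ is almost limited, and since $x \in E^+$ was arbitrary, $T$ is order almost limited.

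The main obstacle is recognizing that one cannot pass directly from ``almost DP'' to ``almost limited'', because almost limited does not appear among the equivalent conditions of Theorem \ref{C}. The bridge is to first convert almost DP into $L$-weak compactness using the $AL$-structure, and only then use the general (non-$AL$) fact that $L$-weakly compact sets are almost limited. One should also keep in mind that the $AL$-hypothesis is genuinely needed for the converse, so a final remark exhibiting an order almost DP operator into a non-$AL$ target that fails to be order almost limited (for instance, an identity-type example in the spirit of $Id_{\ell_{\infty}}$ used elsewhere in the paper) would round out the statement, though it is not strictly required by the theorem as phrased.
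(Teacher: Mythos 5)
Your proposal is correct and follows essentially the same route as the paper's own proof: the forward direction uses that almost limited sets are almost DP, and the converse passes from almost DP to $L$-weakly compact via Theorem \ref{C} and then to almost limited via the fact from \cite{Almost limited set}. The only cosmetic difference is that the paper's proof also includes the counterexample $Id_c: c \rightarrow c$ (rather than an $\ell_\infty$-type example) to show the converse fails without the $AL$-hypothesis.
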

\begin{proof} Since each almost limited set in a Banach lattice is a almost DP set, so order almost limited operator is an order almost DP operator, but the converse is false, in general. Indeed, $Id_c: c‎\rightarrow ‎c$ is order almost DP (because $[-1,1]=B_c$ is almost DP set), but it is not order almost limited (because $[-1,1]=B_c$  is not almost limited set‌).\\
If $T$ is an order almost DP operator from $E$ to an $AL$-space $F$, then for each $x\in E^+$, $T[-x,x]$ is almost DP and by Theorem \ref{C}, it is $L$-weakly compact and so almost limited. Hence $T$ is an order almost limited set. 
\end{proof}

\begin{theorem}\label{RR} For an operator $T$ from a Banach lattice into an $AL$-space, the following are equivalent:
\begin{itemize}
 \item [(a)]  $T$ is order almost DP operator,
 \item [(b)]  $T$ is order DP operator,
 \item [(c)]  $T$ is order weakly compact operator,
 \item [(d)]  $T$ is order almost limited operator.
\end{itemize}
\end{theorem}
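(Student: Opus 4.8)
The plan is to route everything through Theorem \ref{C}, which collapses four set-theoretic notions into a single one for norm bounded subsets of an $AL$-space. The crucial observation is that each order interval $[-x,x]$ of $E$ (with $x \in E^+$) is norm bounded, and since $T$ is bounded, its image $T([-x,x])$ is a norm bounded subset of the $AL$-space $F$. Thus Theorem \ref{C} applies verbatim to every such image, and the entire proof reduces to translating statements about images of order intervals back and forth between the set level and the operator level.

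First I would establish the chain (a) $\Leftrightarrow$ (b) $\Leftrightarrow$ (c). By the respective definitions, $T$ is order almost DP (resp.\ order DP, resp.\ order weakly compact) exactly when $T([-x,x])$ is an almost DP set (resp.\ a DP set, resp.\ a relatively weakly compact set) for every $x \in E^+$. Applying Theorem \ref{C} to the norm bounded set $A = T([-x,x])$ in $F$, the three properties ``almost DP,'' ``DP,'' and ``relatively weakly compact'' of $A$ are mutually equivalent. Quantifying over all $x \in E^+$ then yields the equivalence of the three operator properties simultaneously.

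Next I would handle (a) $\Leftrightarrow$ (d). This is already contained in Theorem \ref{R}: that result shows that an order almost limited operator is always order almost DP, and it establishes the converse direction precisely under the hypothesis that $F$ is an $AL$-space. Combining the two implications gives (a) $\Leftrightarrow$ (d), closing the loop and completing the list of equivalences.

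I do not anticipate any genuine obstacle; the argument is essentially bookkeeping once one recognizes that the $AL$-space hypothesis forces the relevant classes of bounded sets to coincide through Theorem \ref{C}. The only point requiring a line of care is to record explicitly that order intervals are norm bounded in a Banach lattice and that bounded operators preserve norm boundedness, so that each $T([-x,x])$ genuinely falls within the scope of Theorem \ref{C}.
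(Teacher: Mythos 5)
Your proof is correct and follows exactly the route the paper intends: the paper states Theorem \ref{RR} without any written proof, leaving it as an immediate consequence of Theorem \ref{C} (applied to each norm bounded image $T([-x,x])$ in the $AL$-space, giving (a) $\Leftrightarrow$ (b) $\Leftrightarrow$ (c)) together with Theorem \ref{R} (giving (a) $\Leftrightarrow$ (d)). Your write-up simply makes that implicit derivation explicit, including the correct observation that order intervals and hence their images under a bounded operator are norm bounded, so Theorem \ref{C} applies.
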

 
 \noindent It should be note that the assertions in Corollary \ref{RR} are not equivalent, in general.\\
 At first recall that by \cite{Weak DP Property Wnuk}, a Banach lattice $E$ has the weak DP property if every weakly compact operator on $E$ is an almost DP operator.\\
By \cite{Almost DP set}, a Banach lattice $E$ has the weak DP property if and only if every relatively weakly compact set in $E$ is an almost DP set.

 \begin{theorem}\label{RRR} Every order weakly compact operator $T$ from a Banach lattice into a Banach lattice $F$ with the weak DP property is order almost DP operator, and this condition on $F$ is essential.
 \end{theorem}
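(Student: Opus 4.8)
The plan is to separate the statement into its two halves: the implication (order weakly compact plus the weak DP property on $F$ gives order almost DP) and the sharpness claim (the hypothesis on $F$ cannot be dropped). For the implication I would argue straight from the characterization recorded just above the statement, namely that $F$ has the weak DP property exactly when every relatively weakly compact subset of $F$ is an almost DP set. Fix an order bounded set $A\subseteq E$, so that $A\subseteq[-x,x]$ for some $x\in E^+$. Since $T$ is order weakly compact, $T[-x,x]$ is relatively weakly compact in $F$; the weak DP property of $F$ then upgrades this to "$T[-x,x]$ is almost DP." Finally, a subset of an almost DP set is again almost DP (the defining uniform-convergence condition is inherited by subsets), so $T(A)\subseteq T[-x,x]$ is almost DP, which is precisely the assertion that $T$ is order almost DP. This half is a short chaining of definitions and poses no difficulty.

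The substance is the claim that the weak DP hypothesis on $F$ is essential, and this is where the work lies. I would first record the structural reason an identity operator cannot witness sharpness: if $Id_F$ is order weakly compact then every order interval of $F$ is relatively weakly compact, hence weakly compact (order intervals being norm-closed and convex), which forces the norm of $F$ to be order continuous; but in an order continuous Banach lattice every order bounded disjoint sequence is norm null, so each order interval is $L$-weakly compact and therefore almost DP by \cite[Proposition 2.8]{Almost DP set} (already used in Theorem \ref{PPP}), making $Id_F$ automatically order almost DP. The counterexample must therefore employ a genuinely non-lattice operator that transplants a \emph{fat} order interval onto a spread-out relatively weakly compact set.

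Concretely I would take $F=\ell_2$, which fails the weak DP property because $\{e_n:n\in\mathbb{N}\}$ is relatively weakly compact ($\ell_2$ is reflexive) yet not almost DP: the disjoint, weakly null basis $(e_n)$ of $(\ell_2)^*=\ell_2$ does not converge uniformly on it, since $\sup_m|\langle e_m,e_n\rangle|=1$ for every $n$. For the domain I would take $E=L^2[0,1]$ and set $Tf=(\langle f,r_n\rangle)_n$, where $(r_n)$ is the Rademacher system; $T$ is bounded by Bessel's inequality, and because $\ell_2$ is reflexive the image of any order interval is automatically relatively weakly compact, so $T$ is order weakly compact. However each $r_n$ lies in the order interval $[-\mathbf{1},\mathbf{1}]$ (the constant function $\mathbf{1}$), and orthonormality gives $Tr_n=e_n$, so $T[-\mathbf{1},\mathbf{1}]\supseteq\{e_n\}$ is not almost DP; hence $T$ is not order almost DP, establishing sharpness.

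The main obstacle, as the second paragraph signals, is realizing that sharpness cannot be demonstrated by an identity—nor by any operator whose order-interval images are norm compact, which quietly rules out the obvious sequence-space candidates $\ell_2$ and $\ell_1$ (their order intervals are norm compact)—and then locating a domain whose order intervals are large enough to carry the obstruction. The space $L^2[0,1]$ plays exactly this role: its order interval $[-\mathbf{1},\mathbf{1}]=B_{L^\infty}$ contains the orthonormal Rademacher sequence, which $T$ maps onto the disjoint basis of $\ell_2$. Before assembling the two halves I would verify only the two routine hereditary facts invoked above, that subsets of almost DP sets are almost DP and that bounded subsets of a reflexive Banach lattice are relatively weakly compact.
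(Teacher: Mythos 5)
Your first half coincides with the paper's argument: order intervals map to relatively weakly compact sets, and the weak DP property of $F$ (equivalently, every relatively weakly compact subset of $F$ is almost DP) finishes the implication. For the sharpness claim you take a genuinely different --- and in fact sounder --- route. The paper also uses $F=\ell_2$, but takes $E=L^1[0,1]$: since order intervals of $L^1[0,1]$ are relatively weakly compact, every operator $T\colon L^1[0,1]\to\ell_2$ is order weakly compact, and the paper then simply asserts that such a $T$ ``is not order almost DP,'' producing no specific operator. That assertion cannot be repaired with those spaces: by Grothendieck's theorem every operator from $L^1[0,1]$ into a Hilbert space is absolutely summing, hence completely continuous, so it carries the weakly compact order intervals of $L^1[0,1]$ onto relatively norm compact --- therefore DP, therefore almost DP --- sets; that is, every operator $L^1[0,1]\to\ell_2$ actually \emph{is} order almost DP, and the paper's counterexample collapses. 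Your replacement of the domain by $L^2[0,1]$ together with the explicit Rademacher-coefficient operator $Tf=(\langle f,r_n\rangle)_n$ avoids exactly this obstruction: $T$ is bounded by Bessel's inequality, order weakly compact because $\ell_2$ is reflexive, and $T[-\mathbf{1},\mathbf{1}]\supseteq\{e_n\}$ is not almost DP, since the disjoint weakly null sequence $(e_n)$ in $\ell_2^*=\ell_2$ satisfies $\sup_m|\langle e_m,e_n\rangle|=1$ for all $n$. So your proposal supplies the concrete witness that the paper's proof lacks; your preliminary observation that no identity operator (indeed no operator whose order-interval images are norm compact) can witness sharpness is not logically needed, but it is correct and explains precisely why the domain must have order intervals large enough to carry an orthonormal system.
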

 
\begin{proof} Since every relatively weakly compact set in a Banach lattice $F$ with the weak DP is an almost DP set, so every order weakly compact operator $T$ from a Banach lattice into a Banach lattice $F$ with the weak DP property is order almost DP operator, but
an order weakly compact
operator is not necessarily order almost DP. In fact, the closed unit ball $B_{\ell_2}$ of the Banach
lattice ${\ell_2}$ is a relatively weakly compact set in ${\ell_2}$, but it is not almost DP; that is, there exist a relatively weakly compact set in ${\ell_2}$ which is not almost DP. Since $L^1[0,1]$ has order continuous norm, by \cite{Aliprantis}, every order interval in $L^1[0,1]$ is relatively weakly compact. So each operator $T: L^1[0,1] \rightarrow‎ \ell_2$ is order weakly compact, but it is not order almost DP (and so it is not order almost limited).
\end{proof}
 
 \noindent Every order almost DP operator is not an order weakly compact operator, in general. Indeed $Id_{\ell_{\infty}}: \ell_{\infty}‎\rightarrow‎ \ell_{\infty}$ is order almost DP operator and it is not order weakly compact. 
 
 \begin{theorem}\label{UU} Every operator $T$ from a Banach lattice with order continuous norm into a Banach lattice with the weak DP property is an order almost DP operator.
 \end{theorem}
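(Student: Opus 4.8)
The plan is to reduce the statement to Theorem \ref{RRR} by showing that the hypothesis of order continuous norm on the domain forces $T$ to be order weakly compact. Write $T \colon E \to F$, where $E$ has order continuous norm and $F$ has the weak DP property, and fix an arbitrary $x \in E^+$; the goal is to check that $T[-x,x]$ is an almost DP set in $F$.

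First I would recall the standard fact, already invoked in the proof of Theorem \ref{RRR} and available in \cite{Aliprantis}, that in a Banach lattice with order continuous norm every order interval is relatively weakly compact. Thus the order interval $[-x,x] \subset E$ is relatively weakly compact. Since $T$ is bounded and hence weak-to-weak continuous, it carries relatively weakly compact sets to relatively weakly compact sets, so $T[-x,x]$ is relatively weakly compact in $F$. As $x \in E^+$ was arbitrary, $T$ carries every order interval of $E$ into a relatively weakly compact subset of $F$; that is, $T$ is an order weakly compact operator.

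Now I would simply apply Theorem \ref{RRR}: since $T$ is order weakly compact and the range $F$ has the weak DP property, $T$ is an order almost DP operator, which is exactly the desired conclusion. Alternatively, one can argue directly, bypassing Theorem \ref{RRR}: by the characterization of the weak DP property recorded just before that theorem, every relatively weakly compact subset of $F$ is an almost DP set, so $T[-x,x]$ is almost DP for each $x \in E^+$, and $T$ is order almost DP by Definition \ref{I}.

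The argument is short, and there is no genuine obstacle once the two ingredients are identified; the only point requiring care is the passage from order continuity of the norm to relative weak compactness of order intervals, and this is precisely where the hypothesis on $E$ is used. The role of the weak DP property of $F$ is then exactly to convert relative weak compactness in the range into the almost DP property, so both hypotheses enter the proof in a clearly separated way.
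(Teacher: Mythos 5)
Your proposal is correct and takes essentially the same route as the paper: order continuity of the norm on $E$ makes each order interval $[-x,x]$ relatively weakly compact, boundedness of $T$ preserves relative weak compactness, and the weak DP property of $F$ converts the image into an almost DP set. Your ``alternative'' direct argument is verbatim the paper's proof, and your primary route through Theorem \ref{RRR} is merely the same argument repackaged through that earlier theorem.
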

 \begin{proof} Since $E$ has order continuous norm, so by \cite{Aliprantis}, for each $x\in E^+$, order interval $[-x,x]$ is relatively weakly compact. So $T[-x,x]$ is relatively weakly compact and so it is almost DP. Hence $T$ is an order almost DP operator.
 \end{proof}
 
\noindent A Banach lattice $E$ is said to be a $KB$-space, whenever every increasing norm
bounded sequence of $E^+$ is norm convergent and it is called a dual Banach lattice if $E = G^*$
for some Banach lattice $G$. A Banach lattice $E$ is called a dual $KB$-space if $E$ is
a dual Banach lattice and $E$ is a $KB$-space. It is clear that each $KB$-space has an order continuous norm. 

\begin{theorem}\label{T} Every almost weakly limited operator from a Banach space $X$ into a Banach lattice $E$ is almost limited, if one of the following assertions is valid.
\begin{itemize}
\item [(a)] The norm of the topological bidual $E^{**}$ is order continuous.
\item [(b)] $E$ is a dual $KB$-space.
\end{itemize}
\end{theorem}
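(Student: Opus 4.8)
The plan is to reduce \ref{T} to a single lattice-theoretic statement about $E^*$ and then to verify that statement under each of the two hypotheses. Recall that $T$ being almost weakly limited means exactly that $A:=T(B_X)$ is an almost DP set, i.e. $\sup_{a\in A}|\langle a,f_n\rangle|\to 0$ for every disjoint weakly null sequence $(f_n)$ in $E^*$, while $T$ being almost limited means the same expression tends to $0$ for every disjoint weak$^*$ null sequence $(f_n)$ in $E^*$. Since every weakly null sequence is weak$^*$ null, the disjoint weakly null sequences form a subclass of the disjoint weak$^*$ null ones; consequently, if one shows that these two classes in fact coincide --- that \emph{every disjoint weak$^*$ null sequence in $E^*$ is weakly null} --- then the almost DP property of $A$ immediately upgrades to its almost limited property, and $T$ is almost limited. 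Thus the whole theorem reduces to this claim about disjoint sequences in $E^*$.

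To prove the claim I would first record that under either hypothesis $E$ is a $KB$-space: in case (b) by definition, and in case (a) because $E^{**}=(E^*)^*$ is then a dual Banach lattice with order continuous norm, hence a $KB$-space, of which $E$ is a closed sublattice. Being a $KB$-space, $E$ is a projection band in its bidual, so $E^{**}=E\oplus E^{d}$ with $E^{d}$ the band of singular functionals on $E^*$. Now let $(f_n)$ be disjoint and weak$^*$ null in $E^*$; it is norm bounded, and weak$^*$ nullity is precisely the statement that $\langle f_n,\hat x\rangle\to 0$ for all $\hat x$ in the $E$-summand of $E^{**}$. Hence $(f_n)$ is weakly null as soon as $\langle f_n,\psi\rangle\to 0$ for every $\psi\in E^{d}$, and after writing $\psi=\psi^{+}-\psi^{-}$ and estimating $|\langle f_n,\psi\rangle|\le\langle|f_n|,|\psi|\rangle$ it suffices to show that each positive singular functional annihilates, in the limit, the positive disjoint norm bounded sequence $(|f_n|)$.

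This last point is where I expect the real difficulty to lie, and it is where the two hypotheses are actually used. In case (a) the band $E^{d}$ inherits order continuity of the norm from $E^{**}$, and one leverages this to force $\langle|f_n|,\psi\rangle\to 0$ for every $\psi\in(E^{d})^{+}$; in case (b), writing $E=G^{*}$, the singular band $E^{d}$ is identified with the functionals on $E^{*}=G^{**}$ vanishing on $G$, and the dual $KB$-space structure is what controls them on disjoint sequences. The subtle obstacle common to both cases is that $(|f_n|)$ is only norm bounded and not order bounded, so the familiar mechanism by which order continuity disposes of \emph{order} bounded disjoint sequences does not apply verbatim; making the passage from order bounded to merely norm bounded disjoint sequences is the technical heart of the proof. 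Once the claim is secured, the reduction in the first paragraph finishes the argument: the almost DP set $T(B_X)$ is almost limited, so $T$ is an almost limited operator.
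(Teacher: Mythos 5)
Your reduction --- ``if every disjoint weak$^*$ null sequence in $E^*$ is weakly null, then almost DP sets are almost limited'' --- is logically valid, but you never actually prove that claim: in both cases you explicitly defer what you yourself call ``the technical heart of the proof.'' Under hypothesis (a) the claim is true, and the honest way to finish is not the band decomposition you sketch but a direct citation: by the classical characterization (Meyer--Nieberg, Theorem 2.4.14, essentially Dodds--Fremlin \cite{Dodds}) applied to the lattice $E^*$, the norm of $(E^*)^*=E^{**}$ is order continuous \emph{if and only if} every norm bounded disjoint sequence in $(E^*)^+$ is weakly null. So under (a), $(|f_n|)$ is weakly null, and then $|\langle \phi,f_n\rangle|\le \langle|\phi|,|f_n|\rangle$ gives that $(f_n)$ itself is weakly null; no splitting $E^{**}=E\oplus E^{d}$ is needed, and weak$^*$ nullity of $(f_n)$ is not even used. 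So case (a) is completable, just not by the route you outline.

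The fatal problem is case (b): there your key claim is simply false, so no amount of technical work can fill the gap along these lines. Take $p_k\uparrow\infty$, $q_k=p_k/(p_k-1)\downarrow 1$, and let $E=\bigl(\sum_k\oplus\,\ell_{p_k}^{k}\bigr)_{\ell_1}$. Then $E=G^*$ for $G=\bigl(\sum_k\oplus\,\ell_{q_k}^{k}\bigr)_{c_0}$, and $E$ is a KB-space (an $\ell_1$-sum of finite-dimensional spaces is weakly sequentially complete, hence contains no copy of $c_0$), so $E$ is a dual KB-space. Now $E^*=\bigl(\sum_k\oplus\,\ell_{q_k}^{k}\bigr)_{\ell_\infty}$, and the sequence $\phi_n\in E^*$ whose $k$-th block is the unit vector $e_n^{(k)}$ for $k\ge n$ (and $0$ otherwise) is disjoint, positive, normalized, and weak$^*$ null (testing against $x\in E$ gives $|\langle \phi_n,x\rangle|\le\sum_{k\ge n}\|x(k)\|\to 0$); yet $\|\sum_{n\in F}a_n\phi_n\|=\sup_{k\ge\max F}\bigl(\sum_{n\in F}|a_n|^{q_k}\bigr)^{1/q_k}=\sum_{n\in F}|a_n|$, so $(\phi_n)$ spans an isometric lattice copy of $\ell_1$ and, by the Schur property of $\ell_1$ together with Hahn--Banach, is \emph{not} weakly null. (This example also shows, incidentally, that (b) does not imply (a).) This is precisely why the paper's own proof works with the set $T(B_X)$ rather than with all of $E^*$: it cites \cite[Theorem 2.9]{Almost DP set} to get that under (a) or (b) every almost DP set in $E$ is $L$-weakly compact, and $L$-weakly compact sets are almost limited because \emph{norm bounded} disjoint sequences in $E^*$ converge uniformly on them. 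Under (b) the upgrade from almost DP to almost limited is a smallness property of the image set, not a coincidence of the two sequence classes in $E^*$, and any correct proof must exploit that.
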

 \begin{proof} If $E^{**}$  has order continuous norm or $E$ is a dual $KB$-space, then by \cite[Theorem 2.9]{Almost DP set}, every almost DP set in $E$ is $L$-weakly compact and so it is almost limited. So every almost weakly limited operator from a Banach space $X$ into a Banach lattice $E$ is almost limited.
\end{proof}

\noindent Note that there exist operators which are order almost DP and weak and almost
DP, but fail to be (almost) weakly limited. Indeed, $Id_{\ell_1}: {\ell_1}‎\rightarrow‎ {\ell_1}$ is order almost DP
and weak almost DP, but it is not (almost) weakly limited, because $\ell_{\infty}$ does not have the (positive) Schur property and so the closed unit ball $B_{\ell_1}$ is not an (almost) DP set.

\noindent In the following result, similarly to \cite[Theorem 4.2]{ordered almost limited}, we characterize Banach lattices $E$ and $F$ on
which each operator from $E$ into $F$ which is order almost DP and weak almost
DP, is almost weakly limited.\\
To establish this result, we will need the following Lemma.

\begin{lemma}\label{TT}
A Banach lattice $E$  such that $E^*$ does not have the positive Schur property if and only if there exist a disjoint weakly null sequence $(f_n)$ in $E^*_{+}$ and a sequence $(x_n)$ in $B_E^+$ and $\epsilon >0$ such that $|f_n(x_n)|>‎\epsilon‎$, for all $n$.
 \end{lemma}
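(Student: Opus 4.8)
The statement is an equivalence, so I would prove the two implications separately, expecting essentially all of the difficulty to sit in the forward direction.

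For the (easy) backward implication, suppose we are given a disjoint weakly null sequence $(f_n)\subseteq E^*_+$, a sequence $(x_n)\subseteq B_E^+$ and $\epsilon>0$ with $|f_n(x_n)|>\epsilon$ for all $n$. Since each $f_n$ is positive, $\|f_n\|=\sup_{x\in B_E^+}f_n(x)$, whence $\|f_n\|\ge f_n(x_n)=|f_n(x_n)|>\epsilon$ for every $n$. Thus $(f_n)$ is a weakly null sequence of positive elements of $E^*$ that fails to be norm null, which is precisely a witness that $E^*$ does not have the positive Schur property.

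For the forward implication, assume that $E^*$ fails the positive Schur property. By definition there is a weakly null sequence $(g_n)$ in $E^*_+$ with $\|g_n\|\not\to 0$; passing to a subsequence we may assume $\|g_n\|>2\epsilon$ for some $\epsilon>0$. The one point that is not formal is that $(g_n)$ need not be disjoint, whereas the conclusion demands a disjoint sequence. Here I would appeal to the known characterization of the positive Schur property (see \cite{positive Schur}): a Banach lattice has the positive Schur property if and only if every disjoint weakly null sequence in its positive cone is norm null. Applying the contrapositive to $E^*$, the failure of the property already supplies a disjoint, weakly null sequence $(f_n)\subseteq E^*_+$ with $\|f_n\|\not\to 0$, and after a further subsequence $\|f_n\|>\epsilon$ for all $n$. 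Finally, positivity of each $f_n$ gives $\|f_n\|=\sup_{x\in B_E^+}f_n(x)>\epsilon$, so for each $n$ one can choose $x_n\in B_E^+$ with $f_n(x_n)>\epsilon$, producing exactly the required triple $(f_n)$, $(x_n)$, $\epsilon$.

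The crux, and the step I would expect to absorb the real work, is the disjointification used in the forward direction. If one prefers not to quote the characterization, it has to be reconstructed by a gliding-hump argument that builds disjoint vectors $0\le f_n\le g_{k_n}$ along a subsequence while keeping $\|f_n\|$ bounded away from $0$. The weak nullity of such an $(f_n)$ would then come for free: for any positive $\psi\in E^{**}$ one has $0\le\psi(f_n)\le\psi(g_{k_n})\to 0$, and writing a general $\psi=\psi^+-\psi^-$ gives $\psi(f_n)\to 0$, so $(f_n)$ is weakly null. Producing the dominated disjoint vectors whose norms do not vanish is the genuinely delicate part, which is exactly why leaning on the established characterization is the cleanest route.
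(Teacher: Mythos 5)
The paper states Lemma \ref{TT} with no proof at all (it is treated as known and used immediately in Theorem \ref{TTT}), so there is no argument of the authors to compare yours against; I can only judge your proposal on its own merits, and it is correct. The backward implication is exactly as you say: positivity gives $\|f_n\|\geq f_n(x_n)>\epsilon$, so $(f_n)$ itself witnesses the failure of the positive Schur property in $E^*$ (disjointness is not even needed there). For the forward implication, the equivalence you quote --- a Banach lattice has the positive Schur property if and only if every \emph{disjoint} weakly null sequence of positive vectors is norm null --- is indeed a known theorem (it goes back to Wnuk and Sanchez, and is contained in the paper's own reference \cite{positive Schur}), so invoking it applied to the Banach lattice $E^*$ is a legitimate and clean way to obtain the disjoint sequence $(f_n)$ with $\|f_n\|>\epsilon$; the final step $\|f_n\|=\sup_{x\in B_E^+}f_n(x)$ for positive functionals is correct and produces the $x_n$. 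Your remark that the real content hides in the disjointification is accurate, and your sketch of the alternative gliding-hump route is also sound in the one detail you spell out: if $0\leq f_n\leq g_{k_n}$ with $(g_{k_n})$ weakly null, then for $\psi\in (E^{**})^+$ one has $0\leq\psi(f_n)\leq\psi(g_{k_n})\to 0$, and the decomposition $\psi=\psi^+-\psi^-$ in the Banach lattice $E^{**}$ gives weak nullity of $(f_n)$. In short: your proof fills a gap the paper left open, and citing \cite{positive Schur} for the disjoint-sequence characterization is exactly the right move.
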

  
\begin{theorem}\label{TTT} The following assertions are equivalent:
\begin{itemize}
\item [(a)] each order almost DP and weak and almost DP operator $T: E‎\rightarrow‎ F$ is
almost weakly limited;
\item [(b)] $E^*$ has order continuous norm or $F^*$ has the positive Schur property.
\end{itemize}
\end{theorem}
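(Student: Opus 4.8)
The plan is to push everything through the adjoint $T^{*}:F^{*}\to E^{*}$ tested against an arbitrary disjoint weakly null sequence $(f_n)$ in $F^{*}$. Using the identity $\sup_{|x|\le u}|g(x)|=|g|(u)$ for $g\in E^{*}$ and $u\in E^{+}$, one gets three parallel reformulations: by Definition \ref{I}, $T$ is order almost DP iff $|T^{*}f_n|(u)\to0$ for every $u\in E^{+}$ (equivalently $|T^{*}f_n|\to0$ in $\sigma(E^{*},E)$); by Definition \ref{II} and the remark after it, $T$ is almost weakly limited iff $\|T^{*}f_n\|\to0$; and, by the stated equivalent form of weak almost DP, $T$ is weak almost DP iff $(T^{*}f_n)(x_n)\to0$ for every weakly null $(x_n)\subset E$. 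Thus (a) is exactly the assertion that the first and third conditions together force the second, and this is the common language I would use for both implications.

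For (b)$\Rightarrow$(a) I would split into the two cases of (b). If $F^{*}$ has the positive Schur property, then by Corollary \ref{P} every operator $E\to F$ is already almost weakly limited, so there is nothing to prove. Assume instead that $E^{*}$ is order continuous and that $T$ is order almost DP and weak almost DP; suppose toward a contradiction that $\|T^{*}f_n\|\not\to0$, so after passing to a subsequence $\|T^{*}f_n\|\ge\varepsilon$ and we may choose $v_n\in B_E$ with $f_n(Tv_n)=(T^{*}f_n)(v_n)\ge\varepsilon/2$. If $(v_n)$ had a weakly convergent subsequence $v_{n_k}\to v$, then $w_k:=v_{n_k}-v$ is weakly null and, since $f_{n_k}(Tv)\to0$, we get $f_{n_k}(Tw_k)\ge\varepsilon/2-o(1)$, contradicting that $T$ is weak almost DP. The remaining possibility is that $(v_n)$ has no weakly convergent subsequence; here I would work with positive norming elements $u_n\in B_E^{+}$ satisfying $|T^{*}f_n|(u_n)\ge\varepsilon/2$ and split $(u_n)$, via a Dodds--Fremlin / Kadec--Pe\l czy\'nski type disjointification, into a relatively weakly compact part (which is routed back into the weak almost DP hypothesis as above) and a disjoint part whose contribution is killed precisely by the order continuity of $E^{*}$. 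This disjoint, non--relatively--weakly--compact case is the main obstacle.

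For (a)$\Rightarrow$(b) I would argue contrapositively and build an explicit operator. If $E^{*}$ is not order continuous, the disjoint--sequence characterization of order continuity \cite{Meyer} yields $\phi\in E^{*}_{+}$ and a disjoint sequence $(g_n)$ with $0\le g_n\le\phi$ and $\|g_n\|\ge\delta>0$; if $F^{*}$ lacks the positive Schur property, Lemma \ref{TT} applied to $F$ gives a disjoint weakly null $(f_n)\subset F^{*}_{+}$, a sequence $(y_n)\subset B_F^{+}$ and $\varepsilon>0$ with $f_n(y_n)>\varepsilon$. Define $T:E\to F$ by $Tx=\sum_{n}g_n(x)\,y_n$. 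Since $\sum_{n\le N}g_n=\bigvee_{n\le N}g_n\le\phi$, one has $\sum_n|g_n(x)|\le\phi(|x|)\le\|\phi\|\,\|x\|$, so the series converges, $T$ is bounded, and $T^{*}f=\sum_n f(y_n)g_n$. Now $T$ is order almost DP because for $u\in E^{+}$ and any disjoint weakly null $(\psi_m)\subset F^{*}$ one has $|T^{*}\psi_m|(u)=\sum_k|\psi_m(y_k)|\,g_k(u)\to0$ by dominated convergence (the $k$--th term is bounded by $(\sup_m\|\psi_m\|)\,g_k(u)$, summable in $k$, and tends to $0$ in $m$ for each fixed $k$). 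It is weak almost DP by the key point of the construction: the map $R:E\to\ell^{1}$, $Rx=(g_k(x))_k$, is bounded by the same estimate, so for weakly null $(x_m)$ the image $R(\{x_m\})$ is relatively weakly compact in $\ell^{1}$, hence norm compact by the Schur property and therefore has equi--small tails; writing $\psi_m(Tx_m)=\sum_k \psi_m(y_k)g_k(x_m)$ and cutting at a large index makes the tail uniformly small while the finite head tends to $0$, so $\psi_m(Tx_m)\to0$. Finally $T$ is not almost weakly limited: since $g_k\ge0$ are disjoint and $f_n(y_k)\ge0$, we have $T^{*}f_n\ge f_n(y_n)g_n\ge0$, whence $\|T^{*}f_n\|\ge f_n(y_n)\|g_n\|\ge\varepsilon\delta$. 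Thus $T$ is order almost DP and weak almost DP but not almost weakly limited, contradicting (a).

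In summary, the technical engine is the three adjoint reformulations in the first paragraph together with the $\ell^{1}$--Schur argument that delivers weak almost DP in the construction; the genuinely delicate step, and the one I expect to cost the most work, is the order--continuous case of (b)$\Rightarrow$(a), where a bounded norming sequence with no weakly convergent subsequence must be decomposed and its disjoint part absorbed using the order continuity of $E^{*}$.
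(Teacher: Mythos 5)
Your proof of (a)$\Rightarrow$(b) is correct, and it is genuinely different from the paper's: the paper invokes \cite[Theorem 4.2]{ordered almost limited} to realize $\ell_1$ as a closed sublattice of $E$ with a positive projection $P:E\to\ell_1$ (via \cite[Proposition 2.3.11]{Meyer}) and takes $T=S\circ P$ with $S$ imported from that reference, whereas your explicit operator $Tx=\sum_n g_n(x)y_n$, built from an order bounded disjoint sequence $0\le g_n\le\phi$ with $\|g_n\|\ge\delta$ and the data of Lemma \ref{TT}, is self-contained. Your three verifications are sound: dominated convergence gives order almost DP, the factorization through $R:E\to\ell^1$ plus the Schur property gives weak almost DP (indeed you can skip the equi-small-tails step: $Rx_m\to0$ weakly in $\ell^1$, hence $\|Rx_m\|_1\to0$, and $|\psi_m(Tx_m)|\le\|\psi_m\|\,\|Rx_m\|_1$), and monotonicity of the norm on $(E^*)^+$ gives $\|T^*f_n\|\ge f_n(y_n)\|g_n\|\ge\varepsilon\delta$.

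The genuine gap is in (b)$\Rightarrow$(a), in the case where $E^*$ has order continuous norm, and you have correctly identified it as unresolved: your plan to pick norming elements $v_n\in B_E$ and split into a weakly convergent part and a disjoint part does not go through, because a bounded sequence in $B_E$ need have no weakly convergent subsequence and no Kadec--Pe\l czy\'nski-type dichotomy is available under the standing hypotheses (only $E^*$, not $E$, is assumed order continuous). The missing idea, which is exactly what the paper uses, is to apply the Dodds--Fremlin criterion \cite[Corollary 2.7]{Dodds} to the functionals $g_n:=T^*f_n$ themselves rather than trying to disjointify a norming sequence in $E$: for a norm bounded sequence $(g_n)\subset E^*$ one has $\|g_n\|\to0$ if and only if (i) $|g_n|(x)\to0$ for every $x\in E^+$ and (ii) $g_n(x_n)\to0$ for every norm bounded disjoint sequence $(x_n)\subset E^+$. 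With this reduction the two hypotheses match the two conditions perfectly: (i) is your adjoint reformulation of order almost DP, and for (ii) the order continuity of $E^*$ guarantees (again by Dodds--Fremlin) that every norm bounded disjoint sequence $(x_n)\subset E^+$ is weakly null, so weak almost DP yields $g_n(x_n)=f_n(Tx_n)\to0$. No case distinction on weak compactness of norming sequences is needed; the quantification over \emph{all} bounded disjoint positive sequences in condition (ii) is what replaces your problematic decomposition.
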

\begin{proof} $(a)\Rightarrow (b).$ Assume $(b)$ is false, i.e, the norm of $E^*$ is not order
continuous and $F^*$ does not have the positive Schur property. We will construct an operator $T: E‎\rightarrow‎ F$ which is order almost DP and weak and almost DP but is
not almost weakly limited. Indeed, the norm of $E^*$ is not order
continuous. By \cite[Theorem 4.2]{ordered almost limited}, we may assume that ${\ell_1}$ is a closed sublattice of $E$ and from \cite[Proposition 2.3.11]{Meyer}, it follows that there is a positive projection $P$ from
$E$ into ${\ell_1}$.\\
On the other hand, $F^*$ does not have the positive Schur property, from precedding lamma, it follows that there exist a disjoint weakly null sequence $(f_n)$ in $F^*_{+}$ and a sequence $(x_n)$ in $B_F^+$ and $\epsilon >0$ such that $|f_n(x_n)|>‎\epsilon‎$, for all $n$.
Now, we consider the operator $T=SoP :E \rightarrow {\ell_1} \rightarrow F$ where $S$ is an operator which is defined in \cite[Theorem 4.2]{ordered almost limited}. Since ${\ell_1}$ has the Schur property, then $T$ is weak
and almost limited and so it is weak and almost DP. The operator $T$ is also order almost limited and so is order almost DP.\\
But, similarly to $((1)\Rightarrow (2)$) in \cite[Theorem 4.2]{ordered almost limited}, the operator $T$ is not almost weakly limited.\\ 
$(b)\Rightarrow (a).$ Let $(f_n)$ be a disjoint weakly null sequence of $F^*$. We have to
prove that $\|T^*f_n\|\rightarrow 0$. By using Corollary 2.7 of Dodds-Fremlin \cite{Dodds}, it suffices
to prove that $|T^*f_n|(x)\rightarrow 0$ for each $x\in E^+$ and $T^*f_n(x_n) \rightarrow 0$, for every norm bounded
disjoint sequence $(x_n)‎\subset‎ E^+$. \\
Indeed, as $(f_n)$ is a disjoint weakly null sequence of $F^*$ and $T$ is order almost DP, we have $|T^*f_n|(x)\rightarrow 0$ for each $x\in E^+$. On the other hand, since the norm of $E^*$ is order continuous, by using Corollary 2.7 of Dodds-Fremlin \cite{Dodds}, it follows that $(x_n)$ is weakly null. Hence,
as $T$ is a weak almost DP operator, we obtain $T^*f_n(x_n) = f_n(T(x_n))\rightarrow 0$.
We conclude that $T$ is an almost DP operator.\\
If $F^*$ has the positive Schur property, the closed unit ball $B_F$ is almost DP set and clearly every operator $T: E‎\rightarrow‎ F$ is almost weakly limited.
\end{proof}

\noindent Note that ‎continuous linear images of DP (resp. limited) sets or sequences are DP (resp. limited), but the same conclusion is false for almost DP sets (resp. almost limited) or sequences, in general. 
  
\noindent In the following theorem, we stablish some conditions which guarantees the continuous linear images of almost limited (resp. almost DP) sets are also almost limited (resp. almost DP).\\
 Recall from \cite{Meyer} that a positive linear operator $T:E\rightarrow F$ between two Banach lattices is almost interval preserving, if $T[0,x]$ is dense in $[0,Tx]$, for every $x\in E^{+}$.  
  
 \begin{theorem}\label{W}
Let $T: E\rightarrow F$ between two Banach lattices be an almost interval preserving operator and let $A$ be an almost DP (resp. almost limited) subset of $E$. Then $T(A)$ is almost DP (resp. almost limited) in $F$.
  \end{theorem}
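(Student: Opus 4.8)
The plan is to push the whole problem through the adjoint operator $T^{*}\colon F^{*}\to E^{*}$. For every $g\in F^{*}$ and every $a\in A$ one has $\langle Ta,g\rangle=\langle a,T^{*}g\rangle$, so that
$$\sup_{b\in T(A)}|\langle b,g\rangle|=\sup_{a\in A}|\langle a,T^{*}g\rangle|.$$
Hence, given a disjoint weakly null (resp. disjoint weak$^{*}$ null) sequence $(g_{n})$ in $F^{*}$, it will suffice to show that $(T^{*}g_{n})$ is again a disjoint weakly null (resp. disjoint weak$^{*}$ null) sequence in $E^{*}$: once this is established, the assumption that $A$ is almost DP (resp. almost limited) immediately gives $\sup_{a\in A}|\langle a,T^{*}g_{n}\rangle|\to 0$, which is exactly the required conclusion for $T(A)$. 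Thus I would reduce everything to two properties of $T^{*}$, namely that it preserves disjointness and that it preserves nullity in the appropriate topology.

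The hard part, and the step where the hypothesis is genuinely used, is proving that $T^{*}$ is a lattice homomorphism. I would establish this directly from the almost interval preserving property together with the Riesz--Kantorovich formula for the lattice operations in $E^{*}$ (see \cite{Aliprantis} and \cite{Meyer}). For $g,h\in F^{*}_{+}$ with $g\wedge h=0$ and any $x\in E^{+}$, since $T$ is positive,
$$(T^{*}g\wedge T^{*}h)(x)=\inf\{\,g(Ty)+h(Tx-Ty):0\le y\le x\,\}=\inf\{\,g(u)+h(Tx-u):u\in T[0,x]\,\}.$$
Because $T[0,x]$ is dense in $[0,Tx]$ and the functional $u\mapsto g(u)+h(Tx-u)$ is norm continuous, the infimum over $T[0,x]$ equals the infimum over $[0,Tx]$, which by the same Riesz--Kantorovich formula in $F^{*}$ is $(g\wedge h)(Tx)=0$. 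Therefore $T^{*}g\wedge T^{*}h=0$; preservation of disjointness of positive functionals is equivalent to $|T^{*}g|=T^{*}|g|$, so $T^{*}$ is a lattice homomorphism. Consequently, if $|g_{n}|\wedge|g_{m}|=0$ for $n\neq m$, then $|T^{*}g_{n}|\wedge|T^{*}g_{m}|=T^{*}|g_{n}|\wedge T^{*}|g_{m}|=T^{*}(|g_{n}|\wedge|g_{m}|)=0$, so $(T^{*}g_{n})$ is disjoint. I expect the delicate point here to be the passage from the infimum over $T[0,x]$ to the infimum over $[0,Tx]$, which must rest squarely on the density hypothesis and on the continuity of the functionals involved.

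Finally I would record the continuity facts. For the almost DP case, $T^{*}$ is a bounded operator and hence weak-to-weak continuous, so it sends a weakly null sequence $(g_{n})$ to a weakly null sequence $(T^{*}g_{n})$. For the almost limited case, $T^{*}$ is itself an adjoint and therefore weak$^{*}$-to-weak$^{*}$ continuous: indeed $\langle x,T^{*}g_{n}\rangle=\langle Tx,g_{n}\rangle\to 0$ for every $x\in E$, so a weak$^{*}$ null $(g_{n})$ produces a weak$^{*}$ null $(T^{*}g_{n})$. Combining this nullity with the disjointness from the previous paragraph shows that $(T^{*}g_{n})$ satisfies the hypotheses of the defining property of the almost DP (resp. almost limited) set $A$, and the reduction in the first paragraph then yields $\sup_{b\in T(A)}|\langle b,g_{n}\rangle|\to 0$ in both cases, completing the proof.
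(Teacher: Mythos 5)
Your proof is correct and follows essentially the same route as the paper: both arguments reduce the statement, via the duality $\sup_{a\in A}|\langle a,T^{*}g_{n}\rangle|=\sup_{b\in T(A)}|\langle b,g_{n}\rangle|$, to the fact that $T^{*}$ carries disjoint weakly (resp.\ weak$^{*}$) null sequences in $F^{*}$ to disjoint weakly (resp.\ weak$^{*}$) null sequences in $E^{*}$, and then invoke the definition of almost DP (resp.\ almost limited) for $A$. The only difference is that the paper simply cites Theorem 1.4.19 of Meyer--Nieberg for the key fact that the adjoint of an almost interval preserving operator is a lattice homomorphism, whereas you prove that disjointness preservation from scratch using the Riesz--Kantorovich formula and the density of $T[0,x]$ in $[0,Tx]$ --- a correct, self-contained substitute for the citation.
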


\begin{proof} Let $(y_n^*)$ be a disjoint weakly null (resp. disjoint weak$^*$ null) sequence in $F^*$. By \cite[Theorem 1.4.19]{Meyer}, $T^*$ is lattice homomorphism and so $(T^*y_n^*)$ is a disjoint weakly null (resp. disjoint weak$^*$ null) sequence in $E^*$. Since $A$ is almost DP (resp. almost limited), $$\limsup_{x\in A}|\langle Tx,y_n^*\rangle|= \limsup_{x\in A} |\langle x,T^*y_n^*\rangle|\rightarrow 0.$$ This completes the proof.
\end{proof}

\begin{corollary}\label{WW}
Every almost interval preserving operator $T: E\rightarrow F$ between two Banach lattices is an order almost DP operator.
  \end{corollary}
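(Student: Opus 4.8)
The plan is to deduce Corollary \ref{WW} directly from Theorem \ref{W}, using only one extra structural fact: that order intervals are almost DP sets. By Definition \ref{I}, an operator is order almost DP exactly when it carries every order-bounded subset of $E$ to an almost DP subset of $F$. Since Theorem \ref{W} already transports almost DP sets through an almost interval preserving operator to almost DP sets, the entire task reduces to supplying the input, namely that order-bounded sets are almost DP to begin with.

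Concretely, I would first fix an order-bounded subset $A\subseteq E$ and pick $x\in E^{+}$ with $A\subseteq[-x,x]$; if $A\subseteq[a,b]$ one may simply take $x=|a|\vee|b|$, since then $y\le b\le x$ and $-y\le-a\le x$ force $|y|\le x$ for every $y\in[a,b]$. Next I would invoke the cited result \cite[Corollary 2.2]{Almost DP set}, already used in the proof of Theorem \ref{MM}, that each order interval $[-x,x]$ of a Banach lattice is an almost DP set. Because $T$ is almost interval preserving (in particular positive, which is precisely the hypothesis Theorem \ref{W} requires), Theorem \ref{W} applied with the almost DP set $[-x,x]$ yields that $T([-x,x])$ is almost DP in $F$. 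Finally, since $T(A)\subseteq T([-x,x])$ and any subset of an almost DP set is again almost DP (the supremum $\sup_{y}|\langle y,f_{n}^{*}\rangle|$ in the defining condition only decreases on passing to a subset), I conclude that $T(A)$ is almost DP, and hence that $T$ is order almost DP.

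In truth this statement has essentially no obstacle: all the genuine work lives in Theorem \ref{W}, and the remaining ingredients are the known fact that order intervals are almost DP together with the hereditary stability of almost DP sets under passing to subsets. The only point deserving a moment of care is verifying the hypothesis of Theorem \ref{W} is met, that $T$ be a positive operator between Banach lattices, but this is built into the definition of almost interval preserving, so no extra positivity assumption is needed.
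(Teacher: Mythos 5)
Your proof is correct and follows essentially the same route as the paper: cite \cite[Corollary 2.2]{Almost DP set} to get that order intervals $[-x,x]$ are almost DP, then apply Theorem \ref{W} to conclude $T([-x,x])$ is almost DP. The extra details you supply (reducing a general order-bounded set to a symmetric interval and noting that subsets of almost DP sets are almost DP) are routine steps the paper leaves implicit.
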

  
  \begin{proof} By \cite[Corollary 2.2]{Almost DP set}, for each $x\in E^+$, order interval $[-x,x]$ is an almost DP set, and by Theorem \ref{W}, $T[-x,x]$ is an almost DP set.
\end{proof}

\begin{theorem}\label{E}
Every almost interval preserving operator $T$ on a Banach lattice $E$ order isomorphic to some $C(K)$ space is an order almost weakly limited operator.
  \end{theorem}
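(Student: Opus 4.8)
The plan is to reduce everything to a single application of Theorem~\ref{W}, by first proving that the closed unit ball $B_E$ is itself an almost DP set; the hypothesis that $E$ is order isomorphic to a $C(K)$ space is precisely what forces $B_E$ to be order bounded, hence almost DP, and so the goal becomes showing that $T(B_E)$ is almost DP, i.e. that $T$ is almost weakly limited in the sense of Definition~\ref{II}.

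First I would fix a lattice isomorphism $\Phi\colon E\to C(K)$ and put $e=\Phi^{-1}(\mathbf{1})$, where $\mathbf{1}$ denotes the constant function $1$ on $K$. Since $\Phi$ and $\Phi^{-1}$ are lattice homomorphisms, $e$ is an order unit of $E$: from $|\Phi x|\le \|\Phi x\|_{\infty}\,\mathbf{1}$ in $C(K)$ one gets, after applying the lattice homomorphism $\Phi^{-1}$ and using $\Phi^{-1}(|\Phi x|)=|x|$, that $|x|\le \|\Phi x\|_{\infty}\,e$ for every $x\in E$. As $\Phi$ is bounded, there is $M>0$ with $\|\Phi x\|_{\infty}\le M\|x\|$; hence $B_E\subseteq[-Me,Me]$, so $B_E$ is order bounded.

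Next I would invoke \cite[Corollary 2.2]{Almost DP set} to see that the order interval $[-Me,Me]$ is an almost DP set. Because the defining supremum in the definition of an almost DP set only decreases on passing to a subset, every subset of an almost DP set is again almost DP; in particular $B_E$ is almost DP. Finally, since $T$ is almost interval preserving, Theorem~\ref{W} applies with $A=B_E$ and yields that $T(B_E)$ is an almost DP set in $E$, which by Definition~\ref{II} is exactly the assertion that $T$ is almost weakly limited.

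The only delicate point is the first step. An order isomorphism need not be isometric, so one cannot claim $B_E=[-e,e]$; I expect the main (though modest) obstacle to be keeping track of the norm-equivalence constant $M$ and checking that order boundedness of $B_E$, rather than the exact identity $B_E=[-e,e]$, is all that the remainder of the argument requires. Once $B_E$ is known to be almost DP, the conclusion is immediate from Theorem~\ref{W}, so no further estimate involving $T$ is needed.
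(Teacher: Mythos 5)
Your proof is correct and takes essentially the same route as the paper: establish that $B_E$ is order bounded, hence an almost DP set by \cite[Corollary 2.2]{Almost DP set}, and then apply Theorem~\ref{W} with $A=B_E$ to conclude via Definition~\ref{II}. The only difference is that you supply the order-unit argument (using $e=\Phi^{-1}(\mathbf{1})$ and the norm-equivalence constant $M$) for the step the paper merely asserts, namely that norm bounded subsets of $E$ are order bounded.
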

  \begin{proof}
 If $E$ is order isomorphic to some $C(K)$ space, then every norm bounded set in $E$ is order bounded and by \cite[Corollary 2.2]{Almost DP set}, it is almost DP. So by Theorem \ref{W}, the operator $T$ is almost weakly limited. 
  \end{proof}
  
\noindent By the tecnique in the proof of Theorem 2.7 in \cite{almost limited set}, we have the following theorem.

\begin{theorem}\label{Q}
Let $E$ and $F$ be two Banach lattices such that $E^*$ has the weakly sequentially continuous lattice operations. If $T:E\rightarrow F$ is an operator, then $T(A)$ is an almost DP set in $F$, whenever $A$ is an almost DP solid set in $E$.
  \end{theorem}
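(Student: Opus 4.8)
The plan is to verify the definition of an almost DP set directly: I would take an arbitrary disjoint weakly null sequence $(y_n^*)$ in $F^*$ and show that it converges uniformly on $T(A)$, that is, $\sup_{x\in A}|\langle Tx,y_n^*\rangle|\to 0$. The first move is to push everything back into $E$ through the adjoint. Writing $g_n=T^*y_n^*$ we have $\langle Tx,y_n^*\rangle=\langle x,g_n\rangle$, and since $T^*$ is weak-to-weak continuous, $(g_n)$ is weakly null in $E^*$. Solidity of $A$ enters exactly here: because $A$ is solid, the standard modulus identity $\sup_{x\in A}|\langle x,g_n\rangle|=\sup_{a\in A\cap E^{+}}\langle a,|g_n|\rangle$ holds (the inequality $|\langle x,g_n\rangle|\le\langle|x|,|g_n|\rangle$ with $|x|\in A\cap E^{+}$ gives one direction, and $\langle a,|g_n|\rangle=\sup\{|\langle y,g_n\rangle|:|y|\le a\}$ together with solidity gives the other). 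This reduces the whole problem to controlling the positive functionals $|g_n|$ uniformly on $A\cap E^{+}$.

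Next I would invoke the hypothesis on $E^*$. Since $(g_n)$ is weakly null and the lattice operations of $E^*$ are weakly sequentially continuous, we obtain $|g_n|\to 0$ for $\sigma(E^*,E)$; in particular $\langle a,|g_n|\rangle\to 0$ for each fixed $a\in E^{+}$. The subtlety is that this is only a \emph{pointwise} statement on $A\cap E^{+}$, whereas the quantity I must kill is the \emph{supremum} $\sup_{a\in A\cap E^{+}}\langle a,|g_n|\rangle$, and pointwise decay of a positive weakly null sequence does not by itself force uniform decay over an almost DP set.

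The main step, and the part I expect to be the real obstacle, is upgrading this pointwise convergence to uniform convergence over $A$ by exploiting that $A$ is almost DP, following the technique of Theorem 2.7 in \cite{almost limited set}. I would argue by contradiction: if the supremum did not tend to $0$, I could extract $\epsilon>0$ and points $a_n\in A\cap E^{+}$ with $\langle a_n,|g_n|\rangle>\epsilon$ for all $n$, and then run a gliding hump on the positive sequence $(|g_n|)$ to manufacture a genuinely \emph{disjoint} weakly null sequence in $E^*$ that still stays bounded away from $0$ against points of $A$. Feeding that disjoint weakly null sequence into the defining property of the almost DP set $A$ yields the contradiction. The delicate point is keeping the disjointified sequence simultaneously disjoint, weakly null, and bounded below on $A$; it is precisely the weakly sequentially continuous lattice operations of $E^*$ that keep the moduli under control while the disjoint sequence is extracted. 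Once the contradiction is obtained, $\sup_{x\in A}|\langle Tx,y_n^*\rangle|\to 0$ for every disjoint weakly null $(y_n^*)\subset F^*$, which is exactly the assertion that $T(A)$ is almost DP in $F$.
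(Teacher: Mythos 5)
Your setup is sound and matches the paper's opening moves: passing to $g_n=T^*y_n^*$, using solidity of $A$ to reduce to positive elements and moduli, and using the weakly sequentially continuous lattice operations of $E^*$ to get $|g_n|\to 0$ weakly. You also correctly locate the crux: upgrading pointwise decay of $\langle a,|g_n|\rangle$ to uniform decay over $A$. But at exactly that point your proposal stops being a proof. ``Run a gliding hump on $(|g_n|)$ to manufacture a disjoint weakly null sequence in $E^*$ that stays bounded away from $0$ on $A$'' is a statement of what you need, not an argument for it: producing \emph{disjoint} functionals that simultaneously remain uniformly large on prescribed points of $A$ is precisely the hard part, and weak sequential continuity of the lattice operations does not accomplish this --- nothing in your sketch explains how disjointness and the lower bound on $A$ are to be achieved at the same time. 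That step is a genuine gap, and it is the core of the theorem.

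The paper closes this gap with two specific tools, arranged differently from your contradiction scheme. First, it fixes an \emph{arbitrary disjoint sequence $(x_n)$ in $A^{+}$} --- so disjointness is taken in $E$, not manufactured in $E^*$ --- and applies the disjointification lemma of \cite[page 77]{Aliprantis}: there is a disjoint sequence $(h_n)\subset E^*$ with $|h_n|\le |g_n|$ and $h_n(x_n)=g_n(x_n)$ for all $n$. Domination by the positive weakly null sequence $(|g_n|)$ (this is where the hypothesis on $E^*$ is really consumed) makes $(h_n)$ weakly null, so the almost DP property of $A$ applies to the disjoint weakly null sequence $(h_n)$ and yields $g_n(x_n)=h_n(x_n)\to 0$. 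Second, the passage from ``pointwise convergence plus convergence along every disjoint sequence of $A^{+}$'' to the uniform statement $\sup_{x\in A}|g_n|(|x|)\to 0$ is exactly Theorem 2.4 of Dodds--Fremlin \cite{Dodds}; this is where solidity of $A$ is used, and the inequality $|g_n(x)|\le |g_n|(|x|)$ then finishes. If you tried to push your contradiction/gliding-hump plan through, you would in effect be re-proving that Dodds--Fremlin theorem from scratch; citing it (together with the disjointification lemma) is what turns your outline into a proof.
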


\begin{proof} Suppose that $(f_n)$ is a disjoint weakly null sequence in $F^*.$ We claim that $(|T^*(f_n)|)$ is weakly null in $E^*.$ In fact, if $E^*$ has the weakly sequentially continuous lattice operations then $(|T^*(f_n)|)$ is weakly  null, since $(T^*f_n)$ is weakly null in $E^*$.\\
Now, to finish the proof, we have to show that $$\sup_{y\in TA}|f_n(y)|= \sup_{x\in A}|T^*f_n(x)|\rightarrow 0.$$ Let $(x_n)\subseteq A^{+}$ be a disjoint sequence. Since the sequence $(|T^*(f_n)|)$ is weakly null in $E^*$, by \cite[page 77]{Aliprantis}, there exists a disjoint sequence $(g_n)\subseteq E^*$ such that $|g_n|\leq |T^*f_n|$ and $g_n(x_n)=(T^*f_n)(x_n)$, for all $n.$
It is easy to see that the sequence $(g_n)$ is weakly null. As the set $A$ is almost DP, $‎\sup_{x\in A}|g_n(x)|\rightarrow 0.$ From the inequality$$|T^*f_n(x_n)|=|g_n(x_n)|\leq \sup_{x\in A}|g_n(x)|,$$ we conclude that $T^*f_n(x_n)\rightarrow 0.$
Now, by \cite[Theorem 2.4]{Dodds}, we have $$\sup_{x\in A}|T^*f_n|(|x|)=\sup_{x\in A}|T^*f_n|(x)\rightarrow 0.$$ Thus by the inequality $|T^*f_n(x)|\leq |T^*f_n|(|x|),$ we see that $\sup_{x\in A}|T^*f_n(x)|\rightarrow 0$ and so $T(A)$ is almost DP.
\end{proof}

\noindent According to \cite[Lemma 2.2]{almost limited set}, for almost limited sets, we have the following lemma for almost DP sets.
\begin{lemma}\label{QQ} Let $A$ be a norm bounded subset of a Banach lattice $E$. If for every $‎\epsilon‎ >0$ there exists some almost DP subset $A_‎\epsilon$ of $E$ such that
$A‎\subset‎ A_‎{\epsilon} + \epsilon B_E$, then $A $ is almost DP.
\end{lemma}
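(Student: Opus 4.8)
The plan is to verify the defining property of an almost DP set directly. Fix an arbitrary disjoint weakly null sequence $(f_n)$ in $E^*$; I must show that $\sup_{a\in A}|f_n(a)|\to 0$ as $n\to\infty$. The first step is to record the one genuinely external input: since $(f_n)$ converges weakly, it is norm bounded, so there is $M>0$ with $\|f_n\|\le M$ for all $n$. This $M$ is what makes the approximation error controllable.

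Next I would exploit the hypothesis. Given $\epsilon>0$, choose an almost DP set $A_\epsilon$ with $A\subset A_\epsilon+\epsilon B_E$. For each $a\in A$ write $a=c+\epsilon b$ with $c\in A_\epsilon$ and $b\in B_E$, whence
$$|f_n(a)|\le |f_n(c)|+\epsilon\,|f_n(b)|\le \sup_{c\in A_\epsilon}|f_n(c)|+\epsilon M.$$
Taking the supremum over $a\in A$ on the left (the right-hand bound does not depend on the particular decomposition) yields
$$\sup_{a\in A}|f_n(a)|\le \sup_{c\in A_\epsilon}|f_n(c)|+\epsilon M.$$

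Now I would invoke that $A_\epsilon$ is almost DP: because $(f_n)$ is disjoint and weakly null, $\sup_{c\in A_\epsilon}|f_n(c)|\to 0$. Passing to the limit superior in $n$ in the displayed inequality gives $\limsup_{n\to\infty}\sup_{a\in A}|f_n(a)|\le \epsilon M$. Since $\epsilon>0$ was arbitrary, the left-hand side equals $0$, which is precisely the statement that $A$ is almost DP.

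The argument is a routine three-$\epsilon$ estimate, so I do not anticipate a serious obstacle. The only points that need a moment's attention are the norm boundedness of $(f_n)$, which keeps the term $\epsilon M$ uniformly small, and the order of the quantifiers: the set $A_\epsilon$ must be selected first, the limit in $n$ taken next (for that fixed $\epsilon$), and only afterwards is $\epsilon$ let tend to $0$. Getting this sequencing right is what turns the pointwise approximation $A\subset A_\epsilon+\epsilon B_E$ into the uniform convergence required by the definition.
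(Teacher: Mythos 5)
Your proof is correct and follows exactly the approach the paper intends: the paper gives no explicit proof, stating the lemma by analogy with Lemma 2.2 of the cited work on almost limited sets, whose argument is precisely your three-step estimate (norm boundedness of the disjoint weakly null sequence $(f_n)$, decomposition of $A$ through $A_{\epsilon}+\epsilon B_E$, and letting $\epsilon\to 0$ only after taking $\limsup_{n\to\infty}$). Your handling of the quantifier order is the right point to emphasize, and nothing further is needed.
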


 \begin{theorem}\label{Z} Let $E$, $F$ and $G$ be three Banach lattices. Then
\begin{itemize}
\item [(a)] the class of order almost DP operators is a norm closed vector subspace
of the space $L(E, F)$ of all operators from $E$ into $F$.
\item [(b)] if $T: E‎\rightarrow‎ F$ is an order almost DP operator, then for each almost interval preserving
operator $S: F‎\rightarrow‎ G$, the composed operator $S o T$ is order almost DP.
\item [(c)] if $T: E‎\rightarrow‎ F$ is an order bounded operator, then for each order almost DP
operator $S: F‎\rightarrow‎ G$, the composed operator $S o T$ is order almost DP.
\end{itemize}
\end{theorem}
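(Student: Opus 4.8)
The plan is to prove the three parts of Theorem \ref{Z} by unwinding the definition of an order almost DP operator, namely that $T$ sends each order interval $[-x,x]$ (equivalently each order bounded set) to an almost DP set in the target lattice. For all three parts I will fix an arbitrary $x \in E^+$ and work with the order interval $[-x,x]$, since a set is order bounded precisely when it sits inside such an interval, and a continuous linear operator maps order intervals into order bounded sets.

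For part (a), the task is to show the collection of order almost DP operators forms a norm-closed subspace of $L(E,F)$. The vector-space part should be routine: if $T_1,T_2$ are order almost DP and $\lambda$ is a scalar, then for a disjoint weakly null sequence $(f_n)$ in $F^*$, the quantities $\sup_{y \in T_i[-x,x]}|f_n(y)|$ each tend to $0$, and a triangle-inequality argument gives the same for $(T_1+\lambda T_2)[-x,x]$; the key observation is that $(T_1+\lambda T_2)[-x,x] \subseteq T_1[-x,x] + \lambda\,T_2[-x,x]$ and that a sum of two almost DP sets is almost DP. The norm-closedness is where I expect the real content, and I would handle it using Lemma \ref{QQ}: suppose $T_k \to T$ in operator norm with each $T_k$ order almost DP. Fix $x \in E^+$ and let $\epsilon > 0$; choosing $k$ with $\|T_k - T\| < \epsilon/\|x\|$ (or adjusting constants), I would show that $T[-x,x] \subseteq T_k[-x,x] + \epsilon B_F$, because for any $z \in [-x,x]$ one has $\|Tz - T_k z\| \le \|T - T_k\|\,\|z\| \le \|T - T_k\|\,\|x\|$. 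Since $T_k[-x,x]$ is almost DP, Lemma \ref{QQ} then forces $T[-x,x]$ to be almost DP, giving closedness.

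For part (b), if $T: E \to F$ is order almost DP and $S: F \to G$ is almost interval preserving, I would note that for each $x \in E^+$ the set $T[-x,x]$ is almost DP in $F$, and then apply Theorem \ref{W} directly: an almost interval preserving operator carries almost DP sets to almost DP sets, so $S(T[-x,x]) = (S \circ T)[-x,x]$ is almost DP in $G$. For part (c), if $T: E \to F$ is order bounded and $S: F \to G$ is order almost DP, I would use that an order bounded operator maps order intervals into order intervals (as invoked in the proof of Theorem \ref{MM}): there is some $y \in F^+$ with $T[-x,x] \subseteq [-y,y]$, and then $(S \circ T)[-x,x] \subseteq S[-y,y]$, which is almost DP because $S$ is order almost DP; since a subset of an almost DP set is almost DP, we are done.

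The main obstacle is the norm-closedness in part (a), precisely because, as the paper emphasizes just before Theorem \ref{W}, continuous linear images and limits of almost DP sets need not behave well in general — so one cannot argue cavalierly. The resolution is that Lemma \ref{QQ} is tailor-made for exactly this approximation situation: it converts a uniform norm approximation of the image set by almost DP sets into the conclusion that the limiting image set is itself almost DP. The only care needed is to scale the order interval correctly so that the operator-norm estimate produces the $\epsilon B_F$ perturbation required by the hypothesis of Lemma \ref{QQ}; everything else is the bookkeeping of reducing each statement to a claim about a single order interval and then quoting the appropriate earlier result.
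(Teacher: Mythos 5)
Your proposal is correct and follows essentially the same route as the paper: part (a) rests on Lemma \ref{QQ} (you supply the $\varepsilon$-approximation $T[-x,x]\subseteq T_k[-x,x]+\epsilon B_F$ that the paper leaves implicit with its ``clearly \dots by Lemma \ref{QQ}''), part (b) invokes Theorem \ref{W} exactly as the paper does, and part (c) uses that an order bounded operator maps $[-x,x]$ into some $[-y,y]$ together with the fact that subsets of almost DP sets are almost DP, just as in the paper. Your filled-in details (monotonicity of the lattice norm giving $\|z\|\le\|x\|$ for $z\in[-x,x]$, and stability of almost DP sets under sums and scalar multiples) are all valid.
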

 \begin{proof} $(a).$ Clearly the class of order almost DP operators is a vector subspace of $L(E, F)$ and by Lemma \ref{QQ}, this class is also norm closed.\\
 $(b).$ Let $T: E‎\rightarrow‎ F$ be an order almost DP operator. Then for each $x\in E^+$,
$T([-x, x])$ is an almost DP. Since $S$ is almost interval preserving, $S(T([-x,x]))$
is an almost DP set in $F$, and hence $S o T$ is order almost DP.\\
 $(c).$ Let $T: E‎\rightarrow‎ F$ be an order bounded operator. Then for each $x\in E^+$,
$T([-x, x])$ is an an order bounded. Since $S$ is order almost DP, $S(T([-x,x]))$
is an almost DP set in $F$, and hence $S o T$ is order almost DP.
 \end{proof}
 
 \begin{theorem}\label{ZZ} Let $T$ be an operator from a Banach lattice $E$ into a Banach
lattice  $F$. If $T^*$ is almost DP, then $T$ is order almost DP.
 \end{theorem}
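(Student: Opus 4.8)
The plan is to work straight from the definition of an order almost DP operator, using only the adjoint relation and one standard Banach-lattice identity. Fix $x \in E^+$; I must show that $T([-x,x])$ is an almost DP set in $F$, i.e. that every disjoint weakly null sequence $(f_n)$ in $F^*$ converges to zero uniformly on $T([-x,x])$.

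So let $(f_n)$ be a disjoint weakly null sequence in $F^*$. The first step is to transfer the relevant supremum to the domain side via the adjoint: for each $n$,
$$\sup_{z \in [-x,x]} |\langle Tz, f_n \rangle| = \sup_{z \in [-x,x]} |\langle z, T^* f_n\rangle|.$$
The second step is the elementary identity $\sup_{|z| \le x} |\langle z, g\rangle| = \langle x, |g|\rangle$, valid for every $g \in E^*$ (for the upper bound one only needs $|\langle z, g\rangle| \le \langle |z|, |g|\rangle \le \langle x, |g|\rangle$ whenever $|z| \le x$). Applying this with $g = T^* f_n$ gives
$$\sup_{z \in [-x,x]} |\langle z, T^* f_n\rangle| = \langle x, |T^* f_n|\rangle \le \|x\|\,\bigl\| |T^* f_n| \bigr\| = \|x\|\,\|T^* f_n\|.$$

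The final step invokes the hypothesis: since $(f_n)$ is a disjoint weakly null sequence in $F^*$ and $T^*$ is almost DP, we have $\|T^* f_n\| \to 0$. Combining the three displays yields $\sup_{z \in [-x,x]} |\langle Tz, f_n\rangle| \to 0$, so $T([-x,x])$ is almost DP; as $x \in E^+$ was arbitrary, $T$ is order almost DP.

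I do not expect a genuine obstacle. The argument is essentially a one-line duality computation, resting on the observation that an order interval $[-x,x]$ is tested against a functional $g$ precisely through the scalar $\langle x, |g|\rangle$, together with the fact that the almost DP property of $T^*$ delivers norm convergence $\|T^* f_n\| \to 0$ for exactly the disjoint weakly null sequences that enter the definition of an almost DP set. The only point worth stating carefully is the identity $\sup_{|z|\le x}|\langle z,g\rangle| = \langle x,|g|\rangle$, and even there only the upper estimate $\le \|x\|\,\|g\|$ is actually used.
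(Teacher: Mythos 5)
Your proof is correct and follows essentially the same route as the paper: both transfer the supremum over $[-x,x]$ to the adjoint and use the Riesz--Kantorovich identity $\sup_{|z|\le x}|\langle z, T^*f_n\rangle| = |T^*f_n|(x) \le \|x\|\,\|T^*f_n\|$, with $\|T^*f_n\|\to 0$ coming from the almost DP hypothesis on $T^*$. The only difference is cosmetic: you spell out the duality identity and the norm estimate that the paper leaves implicit.
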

 \begin{proof} Let $(f_n)$ be a disjoint weakly null sequence of $F^*$.
As the adjoint $T^*$ is almost DP, we deduce that $\|T^*f_n\|\rightarrow 0$. So for each $x\in E^+$ order interval $T[-x,x]$ is an almost DP; that is, $$\sup_{z\in T[-x,x]}|f_n(z)|=\sup_{y\in [-x,x]}|T^*f_n(y)| =|T^*f_n|(x)\rightarrow 0.$$ We deduce that $T$ is order Dunford-Pettis.
 \end{proof}
 
 \begin{theorem}\label{XX} Let $T$ be an operator from a Banach lattice $E$ into a Banach
lattice  $F$. If $T^*$ is almost DP, then $T$ is weak and almost DP.
 \end{theorem}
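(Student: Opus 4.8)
The plan is to verify the sequential characterization of weak and almost DP operators recorded earlier in Section 2: it suffices to show that $f_n(Tx_n)\to 0$ for every weakly null sequence $(x_n)\subset E$ and every disjoint weakly null sequence $(f_n)\subset F^*$. First I would rewrite the pairing through the adjoint, namely $f_n(Tx_n)=(T^*f_n)(x_n)$. This is the one genuinely useful manoeuvre, since it transfers the disjoint weakly null sequence $(f_n)$ onto the operator $T^*$, where the hypothesis can be applied.

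Next, because $(f_n)$ is a disjoint weakly null sequence in $F^*$ and $T^*$ is almost DP by assumption, we immediately obtain $\|T^*f_n\|\to 0$ --- exactly as in the opening line of the proof of Theorem \ref{ZZ}. At the same time, the weakly null sequence $(x_n)$ is norm bounded, since every weakly convergent sequence is bounded by the uniform boundedness principle; say $\|x_n\|\leq M$ for all $n$.

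The conclusion is then immediate from the estimate
$$|f_n(Tx_n)| = |(T^*f_n)(x_n)| \leq \|T^*f_n\|\,\|x_n\| \leq M\,\|T^*f_n\| \to 0,$$
which shows $T$ is weak and almost DP. I do not expect any real obstacle: once the adjoint identity and the norm-boundedness of weakly null sequences are in hand, the almost DP hypothesis on $T^*$ does all the work, and the argument runs parallel to that of Theorem \ref{ZZ}. Equivalently, one could argue with the set formulation directly: for a relatively weakly compact --- hence norm bounded --- set $A\subset E$, the same estimate yields $\sup_{x\in A}|(T^*f_n)(x)|\leq \|T^*f_n\|\sup_{x\in A}\|x\|\to 0$, so that $T(A)$ is an almost DP set in $F$.
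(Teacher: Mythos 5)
Your proposal is correct and is essentially the paper's own argument: pass to the adjoint via $f_n(Tx_n)=(T^*f_n)(x_n)$, use the almost DP hypothesis on $T^*$ to get $\|T^*f_n\|\to 0$, and combine with the norm-boundedness of the weakly null sequence $(x_n)$. In fact your estimate $|f_n(Tx_n)|\leq \|T^*f_n\|\,\|x_n\|\leq M\|T^*f_n\|$ is slightly more careful than the paper's, which omits the factor $\|x_n\|$ in its displayed inequality.
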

 \begin{proof} Let $(x_n)$ be a weakly null sequence of $E$ and $(f_n)$ be a disjoint weakly null sequence in $F^*$. We have to prove that $f_n(T (x_n))‎\rightarrow‎0$. As $(f_n)$ is a disjoint weakly null sequence in $F^*$ and hence $T^*$ is almost DP then $\|T^*f_n\|\rightarrow 0$.
On the other hand, since $(x_n)$ is a weakly null sequence of $E$, hence $(x_n)$ is
norm bounded and by the inequality $|T^*(f_n)(x_n))‎| =|f_n(T (x_n))‎| ‎\leq ‎\|T^*f_n\|$ , we conclude that $T$
is weak and almost DP.
 \end{proof}

\noindent Similarly to  \cite[Theorem 3.1]{ordered DP}, we have the following theorem.
\begin{theorem}\label{2.10} The following assertions are equivalent:
\begin{itemize}
\item [(a)] Each order almost DP and weak and almost DP operator $T: E‎\rightarrow‎ F$ has an adjoint almost DP operator;
\item [(b)] $E^*$ has order continuous norm or $F^*$ has the positive Schur property.
\end{itemize}
\end{theorem}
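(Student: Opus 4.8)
The plan is to observe that this theorem is, modulo terminology, a restatement of Theorem \ref{TTT}. Recall from the remark following Definition \ref{II} that an operator is almost weakly limited precisely when its adjoint is almost DP. Indeed, for $T:E\rightarrow F$ and a disjoint weakly null sequence $(f_n)\subset F^*$ one has $\sup_{x\in B_E}|f_n(Tx)|=\sup_{x\in B_E}|T^*f_n(x)|=\|T^*f_n\|$, so $T(B_E)$ being an almost DP set is exactly the statement that $\|T^*f_n\|\rightarrow 0$, i.e. that $T^*$ is almost DP. Under this dictionary, condition (a) here coincides verbatim with condition (a) of Theorem \ref{TTT}, and condition (b) is literally identical in both statements. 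Hence the equivalence follows at once from Theorem \ref{TTT}.

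If one prefers a self-contained argument, I would reprove the two implications directly. For $(b)\Rightarrow(a)$, given an order almost DP and weak and almost DP operator $T$ and a disjoint weakly null sequence $(f_n)\subset F^*$, I would invoke the Dodds--Fremlin criterion (Corollary 2.7 of \cite{Dodds}) to reduce the desired $\|T^*f_n\|\rightarrow 0$ to the two facts that $|T^*f_n|(x)\rightarrow 0$ for each $x\in E^+$ and that $T^*f_n(x_n)\rightarrow 0$ for every norm bounded disjoint sequence $(x_n)\subset E^+$. The first is immediate from the order almost DP property of $T$. For the second, order continuity of the norm of $E^*$ forces such a sequence $(x_n)$ to be weakly null (again by Dodds--Fremlin), whence the weak and almost DP property yields $T^*f_n(x_n)=f_n(Tx_n)\rightarrow 0$. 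The alternative hypothesis, that $F^*$ has the positive Schur property, makes $B_F$ an almost DP set, so every operator into $F$ trivially has an almost DP adjoint.

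For $(a)\Rightarrow(b)$ I would argue by contraposition, exactly as in the proof of Theorem \ref{TTT}. If $E^*$ is not order continuous and $F^*$ fails the positive Schur property, then $\ell_1$ embeds as a closed sublattice of $E$ with a positive projection $P:E\rightarrow\ell_1$ (by \cite[Theorem 4.2]{ordered almost limited} and \cite[Proposition 2.3.11]{Meyer}), while Lemma \ref{TT} supplies a disjoint weakly null sequence $(f_n)\subset F^*_{+}$, a sequence $(x_n)\subset B_F^+$, and $\epsilon>0$ with $f_n(x_n)>\epsilon$ for all $n$. The composition $T=S\circ P$ built from these data is order almost DP and weak and almost DP (because $\ell_1$ has the Schur property), yet it fails to be almost weakly limited; equivalently, $T^*$ is not almost DP, which contradicts (a).

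There is no serious obstacle here: the mathematical content is entirely carried by Theorem \ref{TTT}, and the only point to verify is that ``having an almost DP adjoint'' and ``being almost weakly limited'' name the same property, together with the observation that the $(b)\Rightarrow(a)$ direction of Theorem \ref{TTT} already establishes $\|T^*f_n\|\rightarrow 0$ directly. The mildly delicate ingredient, should one write everything out in full, is the repeated use of the Dodds--Fremlin characterization governing order-continuity-controlled convergence, but this is precisely the tool already deployed in the proof of Theorem \ref{TTT}.
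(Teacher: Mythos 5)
Your proposal is correct and matches the paper's intent: the paper states Theorem \ref{2.10} without a separate proof, precisely because it is Theorem \ref{TTT} restated through the identification (noted after Definition \ref{II}) that $T$ is almost weakly limited if and only if $T^*$ is almost DP, and your verification of that dictionary via $\sup_{x\in B_E}|f_n(Tx)|=\|T^*f_n\|$ is exactly right. Your optional self-contained argument also reproduces the paper's proof of Theorem \ref{TTT} (Dodds--Fremlin reduction for $(b)\Rightarrow(a)$, and the $\ell_1$-projection construction with Lemma \ref{TT} for $(a)\Rightarrow(b)$), so nothing further is needed.
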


\bibliographystyle{amsplain}
 
\end{document}